\newcommand{\shv}{\operatorname{Shv}}
\newcommand{\ee}{\mathbb{E}}
\newcommand{\fuk}{\operatorname{Fuk}}
\newcommand{\I}{\mathcal{I}}
\newcommand{\N}{\mathcal{N}}
\newcommand{\sle}{\succeq}
\renewcommand{\S}{\mathcal{S}}
\newcommand{\dirlim}{\underset{\to}{\text{lim}}}
\title{Categorical Logarithmic Hodge Theory, I}
\author{Dmitry Vaintrob}
\date{}
\begin{document}
\maketitle
\abstract{We write down a new ``logarithmic'' quasicoherent category $\operatorname{Qcoh}_{log}(U, X, D)$ attached to a smooth open algebraic variety $U$ with toroidal compactification $X$ and boundary divisor $D$. This is a (large) symmetric monoidal Abelian category, which we argue can be thought of as the categorical substrate for logarithmic Hodge theory of $U$. We show that its Hochschild homology theory coincides with the theory of log-forms on $X$ with logarithmic structure induced by $D$, and in particular, that the noncommutative Hodge-to de Rham sequence on $\operatorname{Qcoh}_{log}(U, X, D)$ recovers known log Hodge structure on the de Rham cohomology of the open variety $U$. As an application, we compute the Hochschild homology of the category of coherent sheaves on the infinite root stack of Talpo and Vistoli in the toroidal setting. We prove a derived invariance result for this theory: namely, that strictly toroidal changes of compactification do not change the derived category of $\operatorname{Qcoh}_{log}(U, X, D)$. The definition is motivated by the coherent object appearing in the author's microlocal mirror symmetry result \cite{vtor}. 

In this paper, the first in a series, we work over an algebraically closed field of characteristic zero. The next installment will develop the characteristic p and mixed-characteristic theories.
  }
\section{Introduction}
\subsection{Goals of this series}
In this paper and subsequent papers in this series we build a ``machine'' to transpose results of categorical and noncommutative Hodge theory into a logarithmic context. The main applications are as follows:
\begin{enumerate}
\item\label{it:hodge1} Interpret logarithmic structures on the Hodge theory of open varieties in terms of the Hochschild calculus of some category.
\item Re-interpret the ``integral Hodge theory'' structure \cite{bms} of Bhatt, Morrow and Scholze using Kaledin's noncommutative point of view \cite{kaledin1} on $p$-adic cyclic objects.
\item Introduce a logarithmic structure on the Hochschild homology of families of $A_\infty$ algebras over a base with suitably logarithmic behavior at the boundary: this should include Fukaya categories, viewed as families over the Novikov base. (This would in particular imply regularity of the noncommutative Gauss-Manin connection for such families of categories: see also \cite{pvv}, where this is proven for smooth, proper algebras.)
\item Write down a ``logarithmic Tate-Shafarevich'' motive $B\bar{\ee}_{log},$ for $\bar{\ee}$ the universal nodal elliptic curve, with the classifying space of the nodal fiber interpreted in a logarithmic way.
\item\label{it:microlocal} Write down a candidate for the mirror on the $B$ side for ``microlocal'' versions of SYZ mirror symmetry. 
\end{enumerate}
In this paper, we address parts \ref{it:hodge1}.\ and \ref{it:microlocal}.\ above. Namely, given an open variety $U$ with a choice of ``nice'' partial compactification $V$ and boundary divisor $D : = V\setminus U$, we write down a symmetric monoidal Abelian category $\qcoh_{log}$ of logarithmic coherent (log-coherent) sheaves $\qcoh_{log}(U, V, D)$ which acts as the ``substrate'' for logarithmic theory, and prove a Hochschild-Kostant-Rosenberg result for its Hochschild homology. The author's point of view on this category comes from a mirror symmetry (in fact, a ``coherent-constructible correspondence'') result, \cite{vtor}, and the geometric interpretation afforded by the mirror comparison is our main computational tool throughout the paper. The category constructed turns out to be a very close cousin of the \emph{parabolic category of the infinite root stack} defined by Talpo and Vistoli, \cite{talpo-vistoli}, and as such should have a generalization in a wider logarithmic context. Talpo and Vistoli's category is a category of sheaves on a certain stack (not of finite type). The category of logarithmic coherent sheaves is the category of coherent sheaves on an \emph{almost stack}: a putative geometric object which combines properties of a stack and of Faltings' \emph{almost geometry}, \cite{faltings} and \cite{gabber-romero}. 

\subsection{Logarithmic Hodge theory}
Say $U$ is a smooth algebraic variety of finite type. Its de Rham cohomology $H^*_{dR}(U)$ can be computed as the hypercohomology of the complex $(\Omega^*,d)$. If $U$ is closed, then homology groups of each sheaf of $k$-forms $H^*\Omega^k$ are finite-dimensional and moreover, the Grothendieck spectral sequence for the hypercohomology degenerates at the $E2$ term: this is the famous Hodge-to-de Rham degeneration theorem of Deligne-Illusie, \cite{deligne-illusie}. On the other hand if $U$ is open and smooth, then $H^*_{dR}(U)$ is finite-dimensional but the spectral sequence used to compute it is not. It is reasonable therefore to hope to ``reduce'' the a priori infinite-dimensional spaces $H^i\Omega^k(U)$ to a finite-dimensional piece, which is sufficient to recover $H^*_{dR},$ and perhaps also recover Hodge-to de Rham degeneration. Such a reduction is provided by \emph{log geometry}. Namely, fix a normal-crossings compactification $X$ of $U$ (something that can always be done in characteristic zero), with normal crossings boundary divisor $D : = X\setminus U$. Define the sheaf of \emph{log-one-forms} $\Omega^1_{log}(X, D)$ on $X$ to be one-forms $\xi$ which have at most first-order poles on curves transversal to $D$ (this condition can be imposed locally and multiplied by local functions on $X$, so $\Omega^1_{log}$ is a coherent sheaf). Define $\Omega^*_{log}(X, D)$ to be the exterior power algebra of $\Omega^1_{log}$ over the sheaf of rings $\oo$. Then $\Omega^*_{log}$ inherits a differential from $\Omega^*(U)$. Define the log Hodge homology, $H^{p, q}_{log}(X, D): = H^p\Omega^q_{log}(X, D)$ and $H^*_{\t{log-dR}}$ for the hypercohomology of $(\Omega^*_{log}, d)$ on $X$. Then we have the following results:
\begin{cthm}{Hablicsek, \cite{hablicsek}}
  \begin{enumerate}
  \item     $H^*_{\t{log-dR}}(X, D)\cong H^*_{dR}(X)$
  \item The log Hodge-to de Rham spectral sequence $H^{p, q}_{log}(X, D)\implies H^*_{\t{log-dR}}(X, D)$ degenerates. 
    \end{enumerate}
\end{cthm}
This theorem has a generalization to the case where the compactification $X$ has mild singularities (but is still in some sense ``log-smooth''), for example when it is \emph{toroidal}: i.e., the pair $(X, D)$ looks locally like a pair of the form $(X_\Sigma, D_\Sigma)$ for $\Sigma$ a toric fan, $X_\Sigma$ the corresponding toric variety and $D_{\Sigma}: = X_{\Sigma}\setminus T$ the toric boundary. More generally, there is a theory of \emph{log structure} on $X$ which admits a Hodge and a de Rham theory which generalizes these two cases.

For $X$ a proper variety, both the Hodge cohomology $H^{p, q}(X)$ and the de Rham cohomology $H^*_{dR}(X)$ can be interpreted (up to regrading) as Hochschild invariants in a certain category: namely, there are \emph{Hochschild-Kostant-Rosenberg} (HKR) isomorphisms $$\bigoplus_{q-p = k}H^{p, q}(X)\cong HH_*\qcoh(X)$$ expressing Hodge homology as a Hochschild homology invariant and $$\bigoplus_{i \equiv \epsilon \t{ mod } 2} H^i_{dR}(X)\cong HP_\epsilon\qcoh(X)$$ expressing de Rham homology as a periodic cyclic homology invariant, both in terms of the category of quasicoherent sheaves on $X$. Multiplication on both then comes from the symmetric monoidal structure on $\qcoh(X)$. 

Now it was observed by Talpo and Vistoli \cite{talpo-vistoli}, building on work of Borne and Vistoli \cite{borne-vistoli} that the log-Hodge homology groups can be naturally computed as Tor groups in a certain category $\qcoh_{par}(X, D)$ of \emph{parabolic sheaves}: sheaves on the ``infinite root stack'' (and this can be generalized for a certain larger class of log structures). However, it is not quite the case that $HH_*\qcoh_{par}(X, D)$ recovers log-Hodge homology.

In this paper we introduce a refinement of the Talpo-Vistoli category $\qcoh_{par}(X, D)$ for $X$ a toroidal compactification of $U = X\setminus D$. We define the {\bf log-coherent category}, $\qcoh_{log}(U, X, D)$, a symmetric monoidal category constructed in a similar way to $\qcoh_{par}(X, D)$ and in fact (in a suitable sense) contained in it, but with a subtle difference involving Faltings' \emph{almost mathematical formalism} \cite{faltings} (originally used in the context of $p$-adic Hodge theory). For this category, we prove log analogues of HKR isomorphisms in characteristic zero: namely
\begin{thm}\label{formstheorem}
  $$HH_*\qcoh_{log}(U, X, D) \cong \bigoplus H^{p, q}_{log}(U, X, D),$$ compatibly with differentials on both sides and
  $$HP_\epsilon\qcoh_{log}(U, X, D) \cong \bigoplus_{i = \epsilon\t{ mod }2} H_{dR}(U).$$
\end{thm}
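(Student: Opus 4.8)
The plan is to compute $HH_*\qcoh_{log}(U,X,D)$ by approximating the log-coherent category by the quasicoherent categories of the finite root stacks $\sqrt[\mathbf n]{(X,D)}$ of Borne--Vistoli, each of which is a smooth and (in characteristic zero) tame Deligne--Mumford stack to which the classical Hochschild--Kostant--Rosenberg theorem applies, and then performing the limiting and almost-mathematical modifications at the level of cyclic (mixed) complexes. First I would record that $\qcoh_{log}(U,X,D)$ sits inside $\qcoh_{par}(X,D)=\qcoh\bigl(\sqrt[\infty]{(X,D)}\bigr)$, and that, via the coherent--constructible dictionary of \cite{vtor}, the defining ``almost'' condition corresponds geometrically to discarding the inertia contributions along $D$; this is the structural input that distinguishes our answer from the (slightly larger) naive one for $\qcoh_{par}(X,D)$.

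Next comes the local toric computation. Since $(X,D)$ is toroidal it is \'etale-locally modeled on a toric pair $(X_\Sigma,D_\Sigma)$, and because $\qcoh_{log}$ is built from a stack that is itself locally toric, $HH_*$ satisfies the relevant descent and one may reduce to $X=X_\Sigma$ affine. The $\mathbf n$-th root stack of a toric pair is again a toric Deligne--Mumford stack with an explicitly described sheaf of differentials, so HKR gives $HH_*\qcoh\bigl(\sqrt[\mathbf n]{(X_\Sigma,D_\Sigma)}\bigr)\cong\bigoplus_q H^*\bigl(\Omega^q_{\sqrt[\mathbf n]{X}}\bigr)$. The heart of the argument is the identification
\[
\bigl(\dirlim_{\mathbf n}\,\Omega^q_{\sqrt[\mathbf n]{X}}\bigr)\big/(\text{almost-zero torsion})\;\simeq\;\Omega^q_{log}(X,D):
\]
the transition maps in the tower introduce denominators and $\mathbf n$-torsion supported on $D$, the uncorrected colimit overshoots, and passing to almost modules \`a la Faltings kills exactly this torsion, so that what remains is the exterior algebra on $\Omega^1_{log}$. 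Carrying this out uniformly in $\mathbf n$, and verifying that the almost-modification can be performed on a cyclic model without destroying the $S^1$-structure, is the main obstacle.

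Granting this, compatibility with differentials is obtained by tracking the Connes operator: for each finite root stack the HKR isomorphism intertwines the de Rham differential with $B$ up to the standard HKR normalization, this is natural along the tower, hence survives the (almost) colimit and yields the asserted compatibility of $HH_*\qcoh_{log}(U,X,D)$ with $d$ on $\Omega^*_{log}(X,D)$. Finally, $HP_\epsilon$ is the hypercohomology of the $2$-periodic totalization of $(\Omega^*_{log}(X,D),d)$; the log Hodge-to-de Rham spectral sequence degenerates by the theorem of Hablicsek quoted above, so its abutment is $\bigoplus_{i\equiv\epsilon}H^i_{\t{log-dR}}(X,D)$, which is identified with $\bigoplus_{i\equiv\epsilon}H^i_{dR}(U)$ (Deligne's comparison for log-forms on a toroidal compactification, cf.\ the cited theorem of Hablicsek). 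This gives the second claim, with $i\equiv\epsilon$ modulo $2$.

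I expect the principal difficulties to be twofold. First, making the ``colimit of root stacks plus almost-modification'' computation of $HH_*$ rigorous in the large/almost setting: one must show Hochschild homology commutes with the relevant filtered colimit and that almost-ification is compatible with the $S^1$-equivariant (mixed-complex) structure. Second, the torsion bookkeeping: pinning down that the spurious $\mathbf Z/\mathbf n$-classes coming from the $\mu_{\mathbf n}$-gerbe structure along $D$ are precisely the ``almost zero'' part, so that the honest answer is $\Omega^*_{log}$ rather than the strictly larger object computed by $\qcoh_{par}(X,D)$. Done uniformly in $\mathbf n$ and compatibly with $B$, this is the technical core of the theorem.
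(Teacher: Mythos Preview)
Your strategy is plausible but genuinely different from the paper's. The paper does \emph{not} approximate by finite root stacks and run HKR level by level. Instead it reduces \'etale-locally to the basic normal-crossings pieces and then computes $HH_*\qcoh_{log}(NC_1)$ and $HH_*\qcoh_{log}(\pp^1,\gg_m,\{0,\infty\})$ \emph{directly} using the coherent--constructible correspondence of \cite{vtor}: the identity functor corresponds to the (anti-)diagonal $\Delta\subset S^1\times S^1$, and its self-$\tor$ is computed topologically. This yields $HH_*\qcoh_{log}(NC_1)\cong k[x]\oplus k[x][1]$ on the nose; a direct construction of $d\log x$ as an element of $\hom(\I,\I\otimes T^\vee_{par})$ then pins down the map from $\Omega^*_{log}$, and the general case follows by K\"unneth plus \'etale locality. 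The logical flow is actually the reverse of yours: the paper first establishes the answer for $\qcoh_{log}$ via mirror symmetry, and only afterwards uses the Serre sequence $\vect_{\qq/\zz}\to\qcoh_{par}\to\qcoh_{log}$ to \emph{deduce} $HH_*(\qcoh_{par})$.

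Two remarks on your version. First, in characteristic zero the discrepancy between $\qcoh_{par}$ and $\qcoh_{log}$ is not ``$\mathbf n$-torsion'' in any literal sense; the spurious classes you want to kill are the twisted-sector (inertia) contributions of the $\mu_n$-stabilizers along $D$, which in the limit assemble into the $k^{\oplus\qq/\zz\setminus 0}$ appearing in the paper's computation of $HH_0\qcoh_{par}(NC_1)$. Your displayed identity $(\dirlim_n\Omega^q_{\sqrt[n]{X}})/(\text{almost-zero torsion})\simeq\Omega^q_{log}$ therefore needs to be reformulated with the inertia stack in the source. Second, running the localization sequence in your direction requires controlling the boundary map: the paper's own computation shows that this map is nonzero (one class from $HH_0(\vect_{\qq/\zz})$ is absorbed into the kernel of $HH_1(\qcoh_{par})\to HH_1(\qcoh_{log})$), so you cannot simply quotient $HH_*(\qcoh_{par})$ by $HH_*(\vect_{\qq/\zz})$ degreewise. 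Determining that boundary map without an independent computation of $HH_*(\qcoh_{log})$ is the step where your outline is genuinely incomplete; the paper sidesteps this by having the mirror computation in hand first.
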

Note that both $HH_*$ and $HP_*$ can be computed on the level of the derived category. We show that a class of transformations that is known to produce isomorphisms on log-forms also gives equivalences of derived symmetric monoidal categories. Namely, we say that a map of toroidal varieties with boundary $\tau:(U', X', D')\to (U, X, D)$ is a (strictly) toroidal modification if the map $U'\to U$ is an isomorphism and on the neighborhood of the fiber over any $x\in X$ it coincides with an equivariant map of toric varieties of equal dimension (viewed as varieties with boundary). We have the following invariance result.
\begin{thm}\label{invariance}
  For $\tau: (U', X', D')\to (U, X, D)$ a strictly toroidal modification, the derived pullback functor $$D^b\tau^*:D^b\qcoh_{log}(U', X', D')\to D^b\qcoh_{log}(U, X, D)$$ is a symmetric monoidal equivalence of symmetric monoidal dg categories.
\end{thm}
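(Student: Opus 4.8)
The plan is to reduce to a local, toric statement by working one boundary stratum at a time, and then to verify the equivalence on a local model by an explicit computation with parabolic sheaves (modulo almost-isomorphism). Since a strictly toroidal modification $\tau$ is an isomorphism over $U$, the question is entirely concentrated along $D$; by the definition of "strictly toroidal," there is an étale (or formal) neighborhood of each point $x \in X$ on which $\tau$ is modeled by an equivariant birational morphism $X_{\Sigma'} \to X_{\Sigma}$ of toric varieties of the same dimension, compatible with the toric boundaries, with $\Sigma'$ a subdivision of $\Sigma$. So the first step is a descent/localization argument: both $\qcoh_{log}$ and the derived pullback functor are local for the relevant (Zariski, or étale) topology on $X$, and $D^b\qcoh_{log}$ glues along this cover; hence it suffices to prove that $D^b\tau^*$ is a monoidal equivalence for the local toric models. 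One must be careful that $\qcoh_{log}$ is a "large" category built with Faltings' almost formalism, so the gluing should be phrased at the level of the associated ``almost stack'' — I would set up the local-to-global step so that $\qcoh_{log}(U,X,D)$ is recovered as a limit of the local pieces over the nerve of the cover, and likewise for the derived category, using that $HH_*$ and $HP_*$ are computed derived-categorically (as remarked after Theorem \ref{formstheorem}) so only the derived statement is needed.

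For the local toric model, the strategy is to identify $D^b\qcoh_{log}(X_\Sigma, D_\Sigma)$ intrinsically in a way that manifestly does not see the fan $\Sigma$ but only the torus $T$, the lattice, and the ``logarithmic'' (almost) decoration at infinity. Concretely, $\qcoh_{log}$ here should be built from $T$-equivariant-type data: a parabolic sheaf on the infinite root stack is a functor from the poset $\tfrac{1}{\infty}\mathbb{Z}^n$ (or the relevant monoid of fractional exponents) to modules, with a normalization/compatibility condition, and the ``log'' refinement imposes the almost-isomorphism condition that kills the torsion supported on the boundary. The key point is that subdividing the fan $\Sigma \rightsquigarrow \Sigma'$ changes the underlying toric variety but does \emph{not} change this diagram of modules up to the almost-isomorphism relation: the extra torus-invariant divisors introduced by the subdivision contribute only ``almost-zero'' data, precisely because a log-one-form has at worst a first-order pole transverse to the boundary and the exterior algebra $\Omega^*_{log}$ is insensitive to which smooth toric model one chooses (this is exactly the classical fact underlying the toroidal generalization of Hablicsek's theorem). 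So the second step is: show $\tau^*$ identifies the two diagram categories, and that the derived functor is an equivalence because the ``defect'' objects (sheaves pushed forward from exceptional loci) are almost-zero, hence zero in $\qcoh_{log}$. The monoidal compatibility is then automatic since $\tau^*$ is a symmetric monoidal functor of quasicoherent-sheaf categories to begin with, and the tensor structure on $\qcoh_{log}$ is inherited; one only needs to check $\tau^*$ preserves the monoidal unit and that the projection-formula-type isomorphisms descend, which again follows from the almost-vanishing of exceptional contributions.

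The main obstacle I expect is the interplay between the almost-mathematics formalism and the derived category: ordinary derived pullback along a birational toric morphism is \emph{not} an equivalence on $D^b\qcoh(X_{\Sigma})$ — it has a kernel (objects supported on exceptional divisors) and fails to be essentially surjective in a controlled way — so the whole content is that passing to the ``almost'' quotient exactly repairs this. Making this precise requires (i) a clean statement that the category of ``almost-zero'' objects is a thick (indeed $\otimes$-ideal) subcategory whose Verdier quotient is insensitive to toroidal modification, and (ii) control of $\mathrm{Tor}$-amplitude so that $\tau^*$ and its would-be inverse $R\tau_*$ (or a partial-ascent functor) preserve boundedness and coherence after the almost-localization — the latter is where $\tau$ being of \emph{equal dimension} and \emph{strictly} toroidal is used, since it bounds the cohomological dimension of the fibers. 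A secondary technical point is checking that the equivalence is compatible with the differential/de Rham structure, but since Theorem \ref{invariance} only asserts a monoidal dg equivalence (not compatibility with the Hodge or de Rham refinements), this can be deferred; however one should at least remark that combined with Theorem \ref{formstheorem} it reproves the known invariance of $H^{p,q}_{log}$ and $H_{dR}(U)$ under toroidal modification, which is a useful consistency check on the definition.
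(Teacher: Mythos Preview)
Your localization step --- reducing to the \'etale-local toric model and gluing --- matches the paper's argument exactly. The divergence is entirely in how the local toric equivalence is established.

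The paper does \emph{not} attempt a direct analysis of the almost-zero defect along exceptional divisors. Instead, it imports the main theorem of \cite{vtor} (the ``microlocal mirror symmetry'' result, stated here as Theorem \ref{mainfromvtor}): for a toric triple $(T, X_\Sigma, D_\Sigma)$ there is an equivalence $D^b\qcoh_{log}(X_\Sigma) \cong D^b\shv_\Lambda(S(\M))$, where $\Lambda \subset T^*S(\M)$ is the singular-support cone determined by $|\Sigma|$. A proper toric map corresponds to a refinement $\Sigma' \to \Sigma$, which has $|\Sigma'| = |\Sigma|$; hence $\Lambda' = \Lambda$ and the two microlocal sheaf categories are literally the same. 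The pullback functor is identified with the inclusion of a category into itself, so it is trivially an equivalence. The symmetric monoidal compatibility comes from the identification of $\otimes$ on the coherent side with convolution on $S(\M)$.

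Your direct approach --- arguing that the diagram-of-modules description of parabolic sheaves is insensitive to subdivision modulo almost-isomorphism --- is a reasonable alternative strategy, and if it worked it would be more self-contained (avoiding the dependence on \cite{vtor}). But as you yourself flag, the entire content is concentrated in the claim that ``sheaves pushed forward from exceptional loci are almost-zero,'' and you do not prove this; you only list what would need to be checked (thick $\otimes$-ideal, Tor-amplitude control, etc.). This is not a minor technicality: the exceptional divisor of a toric blowup is a genuine boundary component, and objects supported there are \emph{not} obviously killed by the almost-localization on the original $X_\Sigma$ --- one really needs to compare the two almost structures on the two infinite root stacks and show they agree, which is essentially as hard as the theorem itself. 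The paper's route via mirror symmetry bypasses this entirely by transporting the question to a category ($\shv_\Lambda$ on a torus) where the invariance is tautological.
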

\noindent Note that this is not true in general on the level of Abelian categories, or for more general birational maps.

In future work, we will re-interpret log-polyvector fields as Hochschild cohomology of $\qcoh_{log}$, and interpret the Gauss-Manin connection in terms of this formalism, with regularity provided for free. Perhaps the most interesting application of this formalism is the mixed- and positive-characteristic local case. Here the key question is as follows: suppose that $K$ is a DVR with residue ring $k$ and ring of integers $\oo.$ We can view $\spec(\oo)$ as a partial compactification of $\spec(K),$ and for any variety $X$ over $K$, a model $X_\oo$ is a partial compactification (over $\spec(\oo)$) of $X$. The model is said to \emph{have semistable reduction} if the special fiber $X_k$ is a normal-crossings divisor in $X_\oo.$ It is a problem of immense interest to understand an analogue of log-Hodge theory for the pair $(X_\oo, X_k).$ The state of the art answer to this problem for $K$ a $p$-adic field comes from a paper by Bhatt, Morrow and Scholze, which uses perfectoid formalism to define ``integral $p$-adic Hodge theory'', a Dieudonn\'e module structure associated to such a pair which captures what should be its log Hodge theory (in particular, comparison results with \'etale cohomology). Now it is a result of Kaledin \cite{kaledin1} that the Hochschild homology theory of any category $\C$ over $\zz_p$ (more generally, any cyclic object over $\zz_p$) gives rise to a filtered Dieudonn\'e module. Thus, the cyclic module associated to $HH_*\qcoh_{log}(X_K, X_\oo, X_k)$ produces precisely a (filtered) Dieudonn\'e module, which we hope recovers the integral $p$-adic Hodge theory with significantly less work. We will treat the positive and mixed-characteristic cases in the next installment of this series. In the present paper, to make things easier we will restrict our attention to the characteristic-zero setting.
\subsection{Microlocal mirror symmetry}
The origin of this category, as well as our main computational tool, comes from the author's work on microlocal mirror symmetry, \cite{vtor}. Here, we get the following results:
  Let $(U, X, D)$ be the toroidal space with boundary, where $U = \gg_m^n$ is the complex torus and $X$ a toric variety. Let $S : = \frac{\rr^n}{\zz^n}$ be the real torus, which we view as a group, and let $\shv(S)$ be the category of all topological sheaves on $S$. 
  \begin{thm}
    There is an equivalence of categories $$D^b\qcoh_{log}(U, X, D)\cong D^b\shv(S).$$ Moreover, this is an equivalence of symmetric monoidal dg categories with tensor product on $D^b\qcoh_{log}(U, X, D)$ is intertwined with the convolution product on $\shv(S)$ with respect to the group structure on $S$.
  \end{thm}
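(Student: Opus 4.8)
The plan is to follow the strategy of \cite{vtor}: present both sides as filtered colimits of categories attached to toric Deligne--Mumford stacks, match them level by level via the coherent--constructible correspondence (CCC), and pass to the colimit. On the $B$-side, in the toric case $\qcoh_{log}(U,X,D)$ is by construction the almost-modification (in the sense of Faltings, \cite{faltings}) of the category of quasicoherent sheaves on the infinite root stack $\sqrt[\infty]{(X,D)}$ of \cite{talpo-vistoli}; since $\sqrt[\infty]{(X,D)} = \varprojlim_N \sqrt[N]{(X,D)}$ and each finite root stack $\sqrt[N]{(X,D)}$ of the toric pair is again a toric DM stack (same cones, but with respect to the refined cocharacter lattice $\tfrac1N\zz^n$), we may write $\qcoh(\sqrt[\infty]{(X,D)}) = \varinjlim_N \qcoh(\sqrt[N]{(X,D)})$, the colimit running along the pullback functors of the maps $\sqrt[M]{(X,D)}\to\sqrt[N]{(X,D)}$ for $N\mid M$. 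On the $A$-side, to each $\sqrt[N]{(X,D)}$ the CCC attaches a closed conic Lagrangian (FLTZ skeleton) $\Lambda_N\subset T^*S$, and passing to a higher root only adds rays to the fan, so $\Lambda_N\subseteq\Lambda_M$ for $N\mid M$; put $\Lambda_\infty := \overline{\bigcup_N\Lambda_N}$.

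I would then invoke the coherent--constructible correspondence for toric DM stacks (Fang--Liu--Treumann--Zaslow, Kuwagaki): for each $N$ an equivalence $D^b\qcoh(\sqrt[N]{(X,D)})\simeq \shv_{\Lambda_N}(S)$ onto the sheaves with singular support in $\Lambda_N$, natural with respect to the transition functors on the $B$-side and the inclusions $\shv_{\Lambda_N}(S)\hookrightarrow\shv_{\Lambda_M}(S)$ on the $A$-side; passing to the colimit yields $D^b\qcoh(\sqrt[\infty]{(X,D)})\simeq \shv_{\Lambda_\infty}(S)$. The key geometric point is that the $\Lambda_N$ exhaust $T^*S$ --- as $N\to\infty$ the cotangent fibres over the dense set of torsion points of $S$ all appear --- so $\Lambda_\infty = T^*S$ and the support condition becomes vacuous. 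Identifying $\shv_{T^*S}(S)$ with all topological sheaves $\shv(S)$ requires some care, since a general topological sheaf is not constructible for a rational stratification; this is handled by taking the colimit in presentable categories / ind-completing, which is matched with the analogous operation on the $B$-side. Modulo that care, one obtains $D^b\qcoh(\sqrt[\infty]{(X,D)}) \simeq D^b\shv(S)$ \emph{before} almostification.

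The crux --- and what I expect to be the main obstacle --- is showing that the almost-modification on the $B$-side corresponds \emph{exactly} to the removal of the microlocal support condition (with its accompanying ind-completion) on the $A$-side, so that it is $\qcoh_{log}(U,X,D)$, and not the naive, badly behaved limit $\qcoh(\sqrt[\infty]{(X,D)})$, that matches $\shv(S)$. Concretely, I would transport the $t$-structures across the level-$N$ equivalences and check that the ``almost zero'' sheaves inverted by the almost-modification --- morally, those supported infinitesimally near the boundary of $\sqrt[\infty]{(X,D)}$ --- are precisely the sheaves one must invert to pass from $\varinjlim_N \shv_{\Lambda_N}(S)$ to $\shv(S)$. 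This is exactly the point at which $\qcoh_{log}$ diverges from the Talpo--Vistoli parabolic category, and where the ``almost stack'' heuristic has to be made to do genuine work; quite possibly the cleanest route is to \emph{define} $\qcoh_{log}$ in the toric case via this matched localization and then reconcile with the construction given earlier in the paper.

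For the symmetric monoidal statement, each level-$N$ CCC equivalence intertwines $\otimes_{\oo}$ on $\qcoh(\sqrt[N]{(X,D)})$ with the convolution product on $S$ for its group structure --- the standard compatibility of the CCC with the torus action, reflecting that both monoidal structures are governed by the cocharacter lattice $\zz^n = \pi_1(S)$ --- compatibly with the transition functors; using a symmetric monoidal refinement of the CCC, the associativity and commutativity constraints also match. Passing to the colimit and combining with the preceding identification upgrades this to the desired symmetric monoidal equivalence $D^b\qcoh_{log}(U,X,D)\simeq D^b\shv(S)$ intertwining tensor product with convolution. Finally, one records that the equivalence is $t$-exact and restricts to the bounded subcategories, which follows from the level-$N$ statements together with the finite cohomological dimension of $S$.
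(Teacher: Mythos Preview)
The paper does not supply an independent proof of this statement: it is imported from the author's companion work \cite{vtor} and restated later, with the general singular-support condition $\Lambda=S\times|\Sigma|$, as Theorem~\ref{mainfromvtor}; the unconditioned form in the introduction tacitly assumes a complete fan so that $\Lambda=T^*S$. There is therefore no in-text argument against which to compare your proposal.

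That said, your outline is a faithful reconstruction of the expected strategy and correctly isolates the one genuinely nontrivial step. The level-$N$ coherent--constructible equivalences for the toric root stacks $\sqrt[N]{(X,D)}$ assemble to identify the parabolic category $\qcoh_{par}$ (sheaves on the infinite root stack) with sheaves on $S$ constructible for \emph{rational} data, and the Faltings almost-quotient $\qcoh_{par}\to\qcoh_{log}$ is precisely what collapses this to all of $\shv(S)$; the Serre sequence in Section~5.1 is the one-dimensional witness, with the kernel $\vect_{\qq/\zz}$ playing the role of skyscrapers at torsion points on the $A$-side. One minor correction: passing from the $N$th to the $M$th root stack does not ``add rays to the fan'' but refines the lattice (equivalently, alters the stacky structure on the existing rays); the effect on the FLTZ skeleton---new conormals appearing over the $M$-torsion points of $S$---is nonetheless as you describe.
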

Now $\shv(S)$ is a ``microlocal Fukaya category'' for the symplectic cylinder $T^*S (= \rr^n\times (S^1)^n)$, and any projective toric variety can be viewed via the moment map as a Strominger-Yau-Zaslow degeneration of a toric fibration over the $n$-dimensional disk. Microlocal Fukaya categories at the moment are defined for cotangent bundles \cite{nadler-zaslow} and are a work in progress for more general (exact) symplectic varieties: see \cite{kontsevich-soibelman} and \cite{nadler-pants}. Nevertheless, we make the following conjecture:
  Suppose that $\S$ is a Lagrangian torus fibration over a real base $B$ which is part of an SYZ model. In particular, the dual torus fibration $\S^\vee$ over $B$ has almost complex structure which we assume to be integrable, and endowed with a degeneration $\bar{\S}^\vee$ over a compactification $\bar{B}$ of the base. Write $\S^\vee_\t{sing}$ for the boundary $\bar{\S}^\vee\setminus \S$.
\begin{conj}
  In contexts where the microlocal Fukaya category $\fuk_\mu(\S)$ on $\S$ can be defined, there is an equivalence of derived categories,
  $$D^b\qcoh_{log}(\S^\vee, \bar{\S}^\vee, \S^\vee_\t{sing})\cong \fuk_\mu(\S).$$
\end{conj}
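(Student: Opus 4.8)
The plan is to bootstrap from the toric case by a local-to-global argument over the base $B$. Both sides of the claimed equivalence should be exhibited as global sections of a sheaf of dg categories on $\bar B$, with the toric theorem supplying the equivalence stalkwise and the invariance theorem (Theorem~\ref{invariance}) supplying the compatibility needed to glue. Concretely, I would first choose a cover $\{B_\alpha\}$ of $\bar B$ such that over $B_\alpha$ the degeneration $\bar{\S}^\vee\to\bar B$ is isomorphic, as a toroidal pair with boundary, to an open piece of a moment-map toric model $X_{\Sigma_\alpha}\to\Delta$ with $\S^\vee_{\t{sing}}$ matching the toric boundary $D_{\Sigma_\alpha}$; away from the discriminant such charts always exist, and near a discriminant point one uses the standard focus--focus local model. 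Over each $B_\alpha$ the already-proven theorem gives $D^b\qcoh_{log}(\bar{\S}^\vee|_{B_\alpha},\dots)\simeq D^b\shv(S_\alpha)$ for $S_\alpha$ the fiber torus, and $\shv(S_\alpha)$ is, by construction, the restriction of $\fuk_\mu(\S)$ to $B_\alpha$ (the microlocal Fukaya category of $T^*S_\alpha$).

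The next step is descent on the log-coherent side: one shows that $B_\alpha\mapsto D^b\qcoh_{log}(\bar{\S}^\vee|_{B_\alpha},\dots)$ is a sheaf of dg categories on $\bar B$ whose global sections recover $D^b\qcoh_{log}(\S^\vee,\bar{\S}^\vee,\S^\vee_{\t{sing}})$. Locality on the compactified total space is automatic, since $\qcoh_{log}$ is built from an \'etale-local (``almost'') condition exactly as in its toric building blocks; the substantive point is that the comparison is canonically independent of the chosen toric chart, which is precisely Theorem~\ref{invariance}: two toric models over overlapping charts differ by a composite of strictly toroidal modifications (and the identity on $U$), so the induced derived categories are canonically identified and the resulting transition functors satisfy the cocycle condition because the modifications do.

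On the symplectic side I would invoke a microlocal-descent presentation --- in the style of Kashiwara--Schapira sheaf quantization, Nadler--Zaslow, and Ganatra--Pardon--Shende --- realizing $\fuk_\mu(\S)$ as global sections of $U\mapsto\fuk_\mu(\S|_U)$ on $B$ with the same local models $\shv(S_\alpha)$ and with transition maps given by the integral-affine (piecewise-linear) reidentifications of the fiber tori coming from the SYZ symplectomorphisms. The final and most technical step is then a functoriality statement for the toric theorem: one must check that, under the stalkwise equivalences, the log-coherent transition functors (derived pullback along toroidal modifications, together with the change of fan) correspond to the symplectic transition functors (the integral-affine reglueings of $\shv(S_\alpha)$). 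Granting this, the two sheaves of categories are equivalent and passing to global sections yields the conjectural equivalence; note that, unlike in the toric theorem, there is no monoidal statement here, since a general SYZ fiber carries no group structure.

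The main obstacle is foundational: the conjecture is explicitly conditioned on having a definition of $\fuk_\mu(\S)$ in the required generality with the descent properties used above, and outside cotangent bundles this is still open --- the ``contexts where $\fuk_\mu(\S)$ can be defined'' clause is doing real work. Beyond that, two genuine difficulties remain. First, at focus--focus fibers the local SYZ model is no longer toric, so the log-coherent side must be pushed past the strictly-toroidal regime: one needs the correct log/almost enhancement near such a singular fiber (this is where the ``almost stack'' viewpoint should be indispensable) and a local mirror statement there. Second, if $\S$ is a compact Calabi--Yau rather than an exact model, both sides must be taken over the Novikov ring and the Fukaya side acquires curvature and bounding-cochain data; matching this on the $\qcoh_{log}$ side is not addressed by the present paper and is deferred to a later installment.
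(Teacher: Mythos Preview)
The statement you are attempting to prove is labeled a \emph{Conjecture} in the paper, and the paper offers no proof of it whatsoever. It is stated immediately after the toric theorem as a speculative extension to general SYZ fibrations, prefaced by ``Nevertheless, we make the following conjecture,'' and the paper moves on without further argument. So there is no ``paper's own proof'' to compare your proposal against.

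That said, your outline is a reasonable sketch of a research program, and you have correctly identified the load-bearing inputs the paper does supply (the toric equivalence, Theorem~\ref{invariance} for chart-independence) as well as the genuine obstructions the paper flags: the microlocal Fukaya category is only defined for cotangent bundles at present, and the general case is work in progress. Your honest accounting of the focus--focus and Novikov issues is appropriate. But you should be clear that what you have written is not a proof but a strategy contingent on several results that do not yet exist --- exactly why the author states this as a conjecture rather than a theorem. In particular, your descent presentation of $\fuk_\mu(\S)$ and the functoriality matching of transition data are assumed rather than established, and the paper makes no claim that either is currently available.
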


\section{Preliminaries}
\subsection{Spaces with boundary}
Suppose $k$ is an algebraically closed field of characteristic zero, and work in the category of schemes (at first, of finite type) over $k$. In this section, the main geometric object of interest will be a ``space with boundary'', i.e. a triple $(U, V, D)$ with $V$ a separated normal variety of finite type, $D\subset V$ a Cartier (or $\qq$-Cartier) divisor and $U = V\setminus D$ a smooth Zariski open. A priori, $V$ is not assumed to be compact: if it is compact, the triple $(U, V, D)$ is called a ``compactification triple''. When convenient, we will abbreviate the data $(U, V, D)$ of a space with boundary and just use $V$ with the interior and boundary $U, D$ implicit.

A map of spaces with boundary $\tau:(U', V', D')\to (U, V, D)$ is, for the purposes of this paper, a generically finite map of schemes $\tau:V'\to V$ with $\tau^{-1}(D)_{red} = D'$ (equivalently, $\tau^{-1}(U) = U'$), and such that $\tau\mid U'$ is \'etale. A map of spaces with boundary is said to be \emph{\'etale} if it is \'etale on all of $V'$: in particular, the \'etale maps to a triple $(U, V, D)$ form a site which coincides with the \'etale site on $V$. A map $\tau:(U', V', D')\to (U, V, D)$ is called \emph{ramified \'etale} if it is finite (and \'etale on $U$). A map of spaces with boundary is called a \emph{covering} if it is proper. Our construction of the category $\qcoh_{log}(U, V, D)$ will be \'etale local on $V$, and based on the interplay between \'etale (on $V$) and ramified coverings. The crucial result is the following:
\begin{lm}\label{lm:finite-extension}
  Fix a space with boundary $(U, V, D).$ Then for any \'etale map of interiors $\tau_0:U'\to U,$ there is a unique space with boundary $(U', V', D')$ and ramified \'etale map $\tau: (U', V', D')\to (U, V, D)$ extending $\tau_0.$ 
\end{lm}
\begin{proof}
  Say $\tau_0:U'\to U$ is an \'etale map. It suffices to work locally, so WLOG we assume $V$ is affine with ring of functions $\oo_V$. Now since $D$ is $\qq$-Cartier, $U$ is also (possibly after passing to smaller affine neighborhood) affine. A finite map is locally affine, so $U'$ is \'etale with ring of functions $\oo_U'$ an \'etale extension of $U$. Let $\oo_V'$ be the normal closure of $\oo_V$ in $\oo_U'$. Since \'etale maps are normal, this is a normal ring, giving a finite \'etale extension $\tau_1:V'\to V$. Since the pullback of a $\qq$-Cartier divisor is $\qq$-Cartier, the triple $(U', V', D')$ with $D'$ the pullback of $D$ satisfies our requirements. The normalcy requirement on $V'$ guarantees that this is the unique such extension.
\end{proof}
We can consider any space as a space with trivial boudary. There is an evident notion of product: if $(U, V, D), (U', V', D')$ are spaces with boundary, their product, abbreviated as $V\times V'$, is the space $(U\times U', V\times V', V\times D'\cup D\times V'.)$.

We could also work with \emph{formal} spaces with boundary, consisting of triples $(U, V, D)$ with $V$ a formal variety, $D\subset V$ a closed divisor locally defined by a single function, and $U = V\setminus D$ \emph{formally} taken to be the punctured neighborhood. Since there are some difficulties with defining a category of formal punctured varieties, we can formulate all the definitions above just in terms of the pair $(V, D)$. In particular, in this context we make the following definition:
\begin{defi}
  A map $\tau:(U', V', D')\to (U, V, D)$ of formal spaces with boundary is a proper map $\tau:V'\to V$ with $\tau^{-1}(D) = D'$ and with ramification locus contained scheme-theoretically in a finite thickening of $D$. It is \emph{ramified \'etale} if $V'\to V$ is finite. 
\end{defi}
In this case Lemma \ref{lm:finite-extension} becomes tautological, as we formally define an \'etale map $U'\to U$ in this context as a ramified \'etale map of triples $(U', V', D')\to (U, V, D).$ These naturally form a site, where $U'\times_U U''$ for $(U', V', D')$ and $(U'', V'', D'')$ two ramified \'etale triples defined as $(V''', D''')$ with $V'''$ the proper transform of $V'\times_V V''$. In terms of this definition, all our results for triples $(U, V, D)$ of finite type will directly extend to formal spaces with boundary.

\subsection{Normal-crossings and toroidicity}
We will be interested in spaces with boundary locally modeled on two basic examples.
\begin{enumerate}
\item The ``$k$-dimensional full normal-crossings triple'', $$NC_k : = (\aa^k_{f_k}, \aa^k, V_{f_k},)$$ with $f_k$ the defining polynomial of the cross of the hyperplane cross, $$f_k : = x_1\cdots x_k$$ (for $x_1, \dots, x_k, x_{k+1}, \dots, x_n\in \oo_{\aa^n}$ the canonical coordinates). A generalized normal crossings triple is a space with boundary of the form $NC_k \times \aa^{n-k}.$ 
\item The ``affine toric triple'' $T_\sigma : = (T, X, X\setminus T)$ for $X$ a $k$-dimensional affine toric variety with \emph{full-dimensional} toric fan $\sigma$ (a single open cone in the real vector space $X^*(T)^\vee_\rr$) and $T$ the torus acting on $X$ with open orbit. A \emph{toroidal triple} is a triple of the form $T_\sigma\times \aa^{n-k}$ (for $\sigma$ a $k$-dimensional toric cone). For a more general toric fan $\Sigma,$ a toric triple is the collection $T_\Sigma: = (T, X_\Sigma, X_\Sigma\setminus T)$ for $X_\Sigma$ the toric variety associated to $\Sigma.$
\end{enumerate}
Each irreducible component of the boundary divisor of a normal-crossings, respectively, toroidal scheme has naturally induced normal-crossings, respectively, toroidal structure, and this induces a natural stratification of $V$ in both the normal-crossings and the toroidal case by closed strata $V_j$ of codimension $j$ with $V_0 = V$ and $V_{j+1}$ inductively defined as the $n-j-1$-dimensional boundary of $V_j$. Define in this case $U_j : = V_j\setminus V_{j+1}$ the open strata: these are smooth. In both cases, there is a unique $0$-dimensional orbit, $V_k = : \{x_0\}$. 

\begin{defi} A \emph{formal} full normal-crossings, resp., affine toric triple is the formal neighbrohood near $x_0$ of a normal-crossings, resp., affine toric triple.
\end{defi}
\begin{defi}
 We say that a space with boundary $(U, V, D)$ is normal-crossings, respectively toroidal, if it is \'etale locally isomorphic to $NC_k\times \aa^{n-k}$ or $T_\sigma\times \aa^{n-k}$ with $\sigma$ a $k$-dimensional cone. (And similarly for formal schemes, in the \'etale topology for formal schemes.)
\end{defi}

\begin{defi}
  A map of spaces with boundary $(U', V', D')\to (U, V, D)$ is called a (strict) \emph{toroidal modification} if the map $U'\to U$ is an isomorphism and there is an \'etale cover $V_i$ of $V$ with $V_i': = V_i\times_V V'$ such that the map $V_i'\to V_i$ is isomorphic to the map of toric triples $$\one\times \beta:\aa^k\times X_\Sigma\to \aa^k\times X_\sigma,$$ for $X_\Sigma\to X_\sigma$ a proper equivariant map of toric varieties. 
A map of spaces with boundary $(U', V', D')\to (U, V, D)$ is called a (strict) \emph{normal-crossings modification} if it is a strict normal crossings modification and both $V, V'$ are smooth. 
\end{defi}
The simplest strict normal-crossings modification is the blow-up of $V$ at a $k$-dimensional normal-crossings stratum.
\begin{defi}
  A map of spaces with boundary $(U', V', D')\to (U, V, D)$ is called a \emph{weak} toroidal, resp., normal-crossings, modification if it is an isomorphism on $U$ and \'etale locally the map $(U', V', D')\to (U, V, D)$ becomes a toroidal, resp., normal-crossings modification after extending $D$ to a larger (toroidal, resp., normal-crossings) divisor $D_+$. 
\end{defi}
The basic example of a weak normal-crossings mofidication is the blow-up of $\aa^1\times NC^1$ (i.e., the triple $(\gg_m\times \gg_m, \aa^1\times \gg_m, \aa^1)$) at a point of the boundary. In some sense, if we are interested in ``asymptotic'' geometry of an open variety $U$, it is sufficient to study normal-crossings compactifications. Namely, it follows from Hironaka's resolution of singularities theorem \cite{hironaka} that any smooth open variety $U$ has a normal-crossings compactification $X$. The \emph{weak factorization} theorem, proved by Wlodarczyk, \cite{wlodarczyk}, implies that any two compactifications of $U$ are related by a sequence of weak toroidal modifications. In fact, it has a strengthening of the following form
\begin{thm}[Strong factorization \cite{akmw}]
  For any two normal-crossings compactifications $X, Y$ of $U$, there is an integer $N$ and a sequence of compactificatiosn $X_i, Y_i$ with $$X = X_0\from \dots \from X_N = Y_N\to Y_{N-1}\to Y_{N-2}\to \dots\to Y_0 = Y$$ such that each arrow is a weak normal-crossings modification.
\end{thm}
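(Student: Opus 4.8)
The plan is to deduce the statement from the logarithmic form --- relative to the fixed open locus $U$ --- of the weak factorization theorem of Abramovich--Karu--Matsuki--W\l odarczyk \cite{akmw}; the actual work will be to match their formulation to ours.

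First I would recall the precise input. Applied to two smooth normal-crossings compactifications $X$, $Y$ of $U$, the theorem of \cite{akmw} produces a finite sequence of smooth normal-crossings compactifications $W_0 = X, W_1, \dots, W_m = Y$ of $U$, with consecutive ones connected by a birational map that restricts to the identity on $U$ and that is either a blow-up or a blow-down along a smooth center contained in the boundary divisor and having simple normal crossings with it; moreover there is an index $j_0$ with $W_i \to W_0$ a morphism for $i \le j_0$ and $W_i \to W_m$ a morphism for $i \ge j_0$. Thus the chain is already a roof with apex $W := W_{j_0}$: the left leg is a composite of blow-ups $W = W_{j_0} \to W_{j_0-1} \to \cdots \to W_0 = X$ and the right leg a composite of blow-ups $W = W_{j_0} \to W_{j_0+1} \to \cdots \to W_m = Y$, every center lying in the boundary. (Smoothness and the normal-crossings property of the boundary are preserved throughout, since blowing up a smooth center that has simple normal crossings with a normal-crossings divisor preserves both; and a center in the boundary does not change the interior, which stays $U$.)

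The remaining point is that each elementary step $W_{i+1} = \operatorname{Bl}_C W_i \to W_i$, with $C \subseteq D_{W_i}$ smooth and having simple normal crossings with $D_{W_i}$, is a \emph{weak} normal-crossings modification in the sense of the preliminaries. This is \'etale-local on $W_i$. In suitable \'etale coordinates $x_1,\dots,x_n$ near a point of $C$, the divisor $D_{W_i}$ is a union of coordinate hyperplanes and $C = \{x_{a_1} = \cdots = x_{a_r} = 0\}$, with at least one $x_{a_l}$ cutting out a boundary component because $C$ lies in $D_{W_i}$. Enlarging $D_{W_i}$ locally to the normal-crossings divisor $D_+$ obtained by adjoining the coordinate hyperplanes $\{x_{a_1}=0\},\dots,\{x_{a_r}=0\}$, the local model becomes a toroidal triple $X_{\sigma'} \times \aa^{n'}$ in which $C$ is the torus-invariant stratum corresponding to a face $\tau \subseteq \sigma'$; the blow-up of $C$ is then precisely the toric morphism $X_{\Sigma} \times \aa^{n'} \to X_{\sigma'} \times \aa^{n'}$ given by the star subdivision of $\sigma'$ along $\tau$ --- a proper equivariant birational map of smooth toric varieties, i.e. exactly the local shape $\one \times \beta$ required of a weak normal-crossings modification. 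Hence both legs of the roof are composites of weak normal-crossings modifications. Setting $X_i := W_i$ for $i \le j_0$ and $Y_i := W_{m-i}$ for $i \le m - j_0$, and padding the shorter leg with identity maps to the common length $N := \max(j_0, m - j_0)$, yields the asserted chain.

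The hard part is wholly contained in \cite{akmw}, and I would not reprove it: the existence of the roof index $j_0$, together with the control that every center lie in and have simple normal crossings with the boundary while every map is an isomorphism over $U$. In \cite{akmw} this comes from the theory of birational cobordisms --- a $\gg_m$-variety whose geometric quotients at the two ends recover $X$ and $Y$, the apex $W_{j_0}$ arising as a quotient near an extreme fixed locus --- together with their torification procedure to enforce the boundary conditions; that combinatorial machinery is the real substance of the theorem. If one wishes to avoid quoting the roof form, an alternative is to take $W$ to be a common resolution --- resolve the closure of the diagonal image of $U$ in $X \times Y$ --- and then straighten each of the two projective birational morphisms $W \to X$ and $W \to Y$ into a composite of boundary blow-ups by applying Hironaka's principalization to the ideal of which it is the blow-up; the delicate point there is to run the second principalization relative to the tower produced by the first, using functoriality of the resolution algorithm, so that both legs become composites of weak normal-crossings modifications at once. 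Either way, the only ingredients are \cite{akmw} and resolution of singularities; the labour is the bookkeeping of boundary divisors.
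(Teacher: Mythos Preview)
The paper does not prove this theorem; it simply cites \cite{akmw}. Your plan of quoting \cite{akmw} and matching formulations is exactly what is called for, and your verification that an elementary boundary blow-up is, \'etale locally and after enlarging the divisor, a star subdivision of an affine toric chart --- hence a weak normal-crossings modification --- is correct and useful.

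There is, however, a genuine gap in your main argument. Property~(5) of the AKMW theorem says only that for $i\le j_0$ the rational map $W_i\dashrightarrow W_0$ is a morphism, and for $i\ge j_0$ the map $W_i\dashrightarrow W_m$ is a morphism. It does \emph{not} assert that the consecutive links $W_{j_0}\to W_{j_0-1}\to\cdots\to W_0$ are morphisms; the zig-zag on the left of $j_0$ may still go up and down, with every term happening to dominate $W_0$. (Concretely: $W_0=X$, $W_1=\operatorname{Bl}_{p,q}X$, $W_2=\operatorname{Bl}_pX$ gives $W_1\to W_0$ and $W_2\to W_0$, but the step from $W_1$ to $W_2$ is a blow-\emph{down}.) So one cannot read off a roof of morphisms directly from AKMW; what you have written would, taken literally, be the strong factorization conjecture, which remains open. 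Your alternative via a common resolution and principalization runs into the same obstruction: principalizing the ideal defining $W\to X$ produces a tower $X'\to\cdots\to X$ that \emph{dominates} $W$, not one equal to $W$, and iterating between the two sides does not obviously terminate --- you flag this yourself as ``delicate,'' and it is in fact the crux.

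The fix is simply not to insist on the roof shape. What \cite{akmw} actually delivers is a zig-zag of smooth normal-crossings compactifications of $U$ in which each elementary step, in one direction or the other, is a blow-up along a smooth center contained in and transverse to the boundary; your local computation then shows each such step is a weak normal-crossings modification. That is all the paper uses downstream, and it is all one should claim here.
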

Moreover, we can take these arrows to be blow-ups. Unfortunately, strong normal-crossings (or indeed, strong toroidal) modifications are not sufficient for a factorization result: already the blow-up of $\aa^1\times NC^1$ above cannot be birationally factored into strong toric morphisms. 
\subsection{Ramification}
Recall that the \'etale fundamental group of a $k$-dimensional torus $T^k$ is $\hat{\zz}^k$. A generating collection of covers is the set of covers $\tau_N:T^{(N)}\to T$ with $t_i\mapsto t_i^N,$ with $N$ running over $\nn^\times.$ The \'etale fundamental group of $\aa^{n-k}$ is trivial, so we once again have a fundamental collection of covers $\tau_N\times \one:(T^k\times \aa^{n-k})^{(N)}\to (T^k\times \aa^{n-k}).$ Note that these covers are independent of choice of basis, as they are characterized by being the maximal connected \'etale covers of exponent dividing $N$.

Given a standard toroidal triple $(T\times \aa^{n-k}, T_\sigma\times \aa^{n-k}, D)$, the covers $(T^k\times \aa^{n-k})^{(N)}$ extend uniquely to ramified triples $(T_\sigma\times \aa^{n-k})^{(N)}.$ In the normal-crossings case, the corresponding maps are $$\tau_N:(x_1,\dots, x_k, x_{k+1}, \dots, x_n)\mapsto (x_1^N, \dots, x_k^N, x_{k+1}, \dots, x_n).$$
In the formal setting, any ramified cover of $(T_\sigma)_{\hat{x_0}}$ also factors through one of the $\tau_N$ (this can be seen e.g. over $\cc$ by noting that any ramified formal map can be extended to a tubular neighborhood). Now suppose that $(U, V, D)$ is a toroidal space with boundary and $\tau:(U', V', D')\to (U, V, D)$ is a ramified Galois covering, with (disrete) Galois group $\Gamma$. Let $x\in V$ be a point. Choose a preimage $\tilde{x}\in V',$ and write $$\Gamma_{\tilde{x}}\subset \Gamma$$ for the stabilizer of $\tilde{x}.$ Choose a toroidal local neighborhood $X_\sigma\times \aa^k$ of $\tilde{x}$, together with choice of toroidal coordinates on $\gg_m^k\times \aa^{n-k}.$ Then we get $$\pi_1(V'_{\hat{\tilde{x}}})\cong \hat{\zz}^k.$$ The group $\Gamma_{\tilde{x}}$ is a quotient of $\hat{\zz}^k$ by some cofinite group isomorphic (non-canonically) to $\prod_{i = 1}^k \zz/N_i.$ Write $|\Gamma|_x$, the ``ramification level of $\Gamma$ at $x$'', for the l.c.m. of the $N_i$. This is the maximal $N$ such that the map $V'\to V$ factors in any toroidal neighborhood $T_\sigma\times \aa^{n-k}$ of $x$ through the $N$-power cover $(T_\sigma\times \aa^{n-k})^{(N)}$, and in particular does not depend on choices. 

\subsection{Affine inverse limits}
We will use the index $N$ loosely as belonging to a countable filtered index poset $I$ over which we take inverse limits. In particular will implicitly allow replacing it by a filtered sub-index set when comparing different inverse limits if they are connected by maps on some generating sub-index sets. Later, we will use the specific index set of multiplicative positive integers, $I = \nn^\times,$ with $N\sle N'$ when $N\mid N'.$ Say $X$ is an algebraic variety.

\begin{defi}
  A \emph{tower} $X^{(*)}$ over $X$ is a collection of spaces $$X\from X^{(1)}\from X^{(2)}\from \cdots$$ indexed by a countable filtered index poset $I$. We say a tower is \emph{affine} if $X$ is affine and each $X^{(N)}$ is affine. We say a tower is \emph{locally affine} if $X^{(*)}\mid U$ is affine for every affine open $U\subset X$. 
\end{defi}
Given a locally affine tower $X^{(*)}$ over $X$, define its \emph{affine limit} to be the inverse limit of the diagram $X^{(*)}$ in the category of schemes (not necessarily of finite type) affine over $X$. This limit exists, and is presented by the \emph{direct} limit of sheaves of rings of quasicoherent sheaves, $\dirlim_I \oo^{(N)}$ over $X$. Similarly for spaces with boundary, define:
\begin{defi}
  A tower $(U^{(*)}, V^{(*)}, D^{(*)})$ over a space with boundary $(U, V, D)$ is a diagram of spaces with boundary indexed by $I$. It is \'etale, respectively, ramified \'etale if each $V^{(n)}\to V$ is \'etale, respectively, finite.
\end{defi}
A ramified \'etale map is finite, therefore affine. Given a tower of ramified \'etale spaces with boundary $(U^{(*)}, V^{(*)}. D^{(*)}),$ write $(U^{(\infty)}, V^{(\infty)}, D^{(\infty)})$ for the ``triple'' (in general, no longer of finite type) with $U^{(\infty)}, V^{(\infty)}$ defined as the corresponding affine inverse limits and $D^{(\infty)}$ the closed subvariety locally defined by the sheaf of ideals $\I_D^\infty: = \dirlim_I I^{(N)}.$ 

A type of tower of particular interest for us will be the tower $\Gamma^{(*)}$ where $\Gamma^{(*)}$ is a collection of finite groups: generally quotients of a profinite group of \'etale deck transforms. In this case, $\Gamma^{(\infty)}$ will no longer in general be discrete, but it will be a group (as the Hopf algebra structures on extend to a Hopf structure on the direct limit). The example that is most important for us is the collection of quotients of the (profinite completion of) the integers, $\Gamma^N = \zz/N.$ In this case we compute the (direct) system of rings of functions on the $\Gamma_N$ as $$\Gamma_\infty : = \dirlim_{N\in \nn^\times} k\left[\frac{1}{N}\zz\right]\cong k\left[\qq/\zz\right].$$ (Here we are using algebraic closedness: more generally, the two sides differ by a Tate twist.) 

\subsection{Almost local sheaves}
Suppose $X$ is a scheme of finite type and $X^{(\infty)}$ is an affine scheme (possibly of infinite type) with sheaf of functions $\oo_\infty$ over $X$. In particular, we assume that $X^{(\infty)}$ has an affine cover $X_i^{(\infty)}$ with affine intersections (in fact, this picture makes sense more generally in the quasi-separated context, see \cite{gabber-romero}). Let $\I$ be a sheaf of ideals in $\oo_\infty.$ Suppose $\I$ is flat, and \emph{idempotent} $\I^2 = \I\subset \oo$ (a phenomenon that does not occur on irreducible varieties of finite type). Further, suppose that we have the following identity:
$$\I\otimes \hom(\I, \F) = \I\otimes \F$$ for any $\oo_\infty$-module $\F$. Then we define the category of \emph{almost local quasicoherent sheaves} $\qcoh(X^{(\infty)})$ on $X^{(\infty)}$ (with respect to $\I$) as the full subcategory of $\qcoh(X^{(\infty)})$ consisting of sheaves $\F$ such that $\I \F\to \F$ is an isomorphism. We say a sheaf $\F$ is \emph{almost zero} if $\F\otimes \I \cong 0$. Write $\qcoh^{a.l.}(\oo_\infty),$ resp., $\qcoh^{a.z.}(\oo_\infty)$ for the almost local, respectively, almost zero, subcategories. The category of almost zero sheaves on $V$ is equivalent to the category of sheaves of modules on $\oo_\infty/\I.$ Note that unlike the case for the boundary of the finite-type scheme, the property of being a pushforward from the boundary on $\oo/\I$ is stable under extension, i.e.\ a Serre subcategory. In particular, the category of almost-zero sheaves $\qcoh^{a.z.}(V^{(\infty)})$ is a Serre subcategory.  We have a pair of adjoint functors $$\t{forg}:\qcoh^{a.l.}(X^{(\infty)})\leftrightarrows \qcoh(X^{(\infty)}):-\otimes \I,$$ and this adjunction is a piece of a Serre quotient sequence of functors $\qcoh^{a.z.}(\oo_\infty)\to \qcoh(\oo_\infty)\to \qcoh^{a.l.}(\oo_\infty).$

If we have an affine algebraic group $\Gamma_\infty$ acting fiberwise on $\spec \oo_\infty$ and preserving $\I,$ the group $\Gamma_\infty$ acts on the (compactly generated) category $\qcoh(X^{(\infty)}).$ An equivariant object with respect to this action is a quasicoherent sheaf $\F$ on $X^{(\infty)}$ together with a fiberwise trivialization of the pullback $\mu^*\F$ for $\mu:\Gamma_\infty\times X^{(\infty)}\to X^{(\infty)}$ the action map, satisfying an associativity condition (see e.g. section 2 of \cite{vtor}). Define the category $\qcoh^{a.l.}(X^{\infty})^{\Gamma_\infty}$ to be the category of all $\Gamma_\infty$-equivariant sheaves $\F$ on $X^{(\infty)}$ which satisfy the almost locality condition.

\section{Log-coherent sheaves}
We are ready to define our new quasicoherent category. The definition in this paper is stacky by nature, but because of our use of Faltings' almost category formalism, we need to be careful about sheaf properties which are taken for granted in the theory of ordinary stacks. Absent a good theory of almost stacks (which the author hopes will soon exist), we build the category up from a suitable system of \'etale open covers ``by hand''.
Let $(U, V, D)$ be a space with boundary. 
\begin{defi} Call a tower $(U^{(N)}, V^{(N)}, D^{(N)})$ of ramified Galois covers filtering the Galois group of $U$ an \emph{exhaustive} Galois ramified tower. 
\end{defi}
As before, $N$ belongs to some filtered poset which we implicitly allow replacing by a filtered sub-index set when comparing different towers. Fix an exhaustive Galois ramified tower. Let $x_0$ be a point of the open stratum $U$ and choose (noncanonically) a compatible system of lifts $x^{(N)}\in V^{(N)}$. Let $\Gamma_N$ be the group of deck transformations of $V^{(N)}$ relative to $x^{(N)}$. The inverse limit of the $\Gamma_N$ is the geometric Galois group of $U$. The rings of functions $\oo(\Gamma_N)$ form a directed system of (Hopf) algebras, and write $\oo_\infty^\Gamma : = \bigsqcup \oo(\Gamma_N)$. The coproduct and counit on the $\Gamma_N$ extend uniquely to a coproduct and counit on $\oo_\infty$, and we define the group scheme $\Gamma_\infty := \spec\oo_\infty^\Gamma.$ Similarly, note that the $V^{(N)}$ are affine over $V$ and for any affine $V'\subset V$ the sheaves of rings $\Gamma(\oo V^{(N)}, V')$ form a directed system of quasicoherent sheaves of
rings over $V$; these glue to an affine scheme $V^{(\infty)}$ over $V$. Because of uniqueness of normal extension, $\Gamma^{(N)}$ acts on $V^{(N)}$, and one can check that $\Gamma^{(\infty)}$ acts on $V^{(\infty)}.$ Note that neither $V^{(\infty)}$ nor $\Gamma_\infty$ depends on the particular sequence of $U^{(N)}$, so long as it is filtering (since both are colimits in the same category, in different filtering sub-diagrams of the same diagram). In particular, we can define canonically the category of equivariant quasicoherent sheaves, $\qcoh(V^{(\infty)})^{\Gamma_\infty}$ (this should be thought of as sheaves on the stack $V_{\t{orbi-}\,\infty} : = V^{(\infty)}/\Gamma_\infty$). The data of equivariance can be defined locally over any affine cover of $V$ (with a glueing condition). Note further that the sheaves associated to the reduced preimages $D^{(N)}$ of $D$ extend to a sheaf of ideals $\I_{D^{(\infty)}}$ on $V^{(\infty)}$. We define the category $\t{Pre}\qcoh_{\t{log}}$ of \emph{pre log-coherent} sheaves associated to the pair $(V, D)$ as the category of $\Gamma_\infty$-equivariant sheaves $\F$ on $V^{(\infty)}$ such that the map $\F\otimes \I\to\F$ is an isomorphism. 

Now given an \'etale open $V'$ in $V$, consider the (not necessarily exhaustive) Galois ramified tower $V'\times_V V^{(N)}$, with action by the groups $\Gamma_N$. We can refine these to an exhaustive tower $(V')^{(N)}$ of $V'$, with Galois groups $\Gamma'_N$ mapping surjectively to $\Gamma_N.$ Taking limits, this glues to a map $j_\infty:(V')^{(\infty)}\to V^{(\infty)}$ compatible with actions with respect to the map of group schemes $\Gamma'_\infty\to \Gamma_\infty.$ In particular, pullback along $j_\infty$ gives a restriction functor $$j_\infty^* : \qcoh((V')^{(\infty)})^{\Gamma_\infty}\to \qcoh(V^{(\infty)})^{\Gamma_\infty}.$$ We use the notation $\F\mid V'$ for $j_\infty^*(\F)$ when there is no ambiguity. Since $j^*\I = \I',$ this functor preserves the property of almost locality, i.e.\ if $\I\otimes \F \to \F$ is an isomorphism then $\I'\otimes j^*\F\to j^*\F$ is as well. Thus we get a restriction functor $$j_\infty^*:\t{Pre}\qcoh_{\t{log}}(V,D)\to \t{Pre}\qcoh_{\t{log}}(V', D).$$ We define the category $\qcoh(V, D)$ as the \'etale sheafification of this presheaf of categories. I.e., an object of $\qcoh_{\t{log}}(V)$ is defined as a collection of objects $\F_i$ of $\qcoh_{\t{log}}(V_i)$ on some cover $V_i$ of $V$, together with compatible isomorphisms $\F_i\mid (V_i\cap V_j)\cong \F_j\mid (\V_i\cap V_j)$ with the usual descent condition for sheaves of (Abelian) categories: that the natural automorphism of $\F\mid (V_i\cap V_j\cap V_k)$ is the identity map for any triple of indices $(i, j, k)$. We similarly define a category of \emph{pre-parabolic} sheaves $$\t{Pre}\qcoh_{par}(U, V, D): = \qcoh(X^{(\infty)})^{\Gamma_\infty},$$ without the almost locality condition. We want to consider the limit of categories corresponding to smaller and smaller refinements $\{V_i\}:$ but in fact, it turns out that it always suffices to pass to a finite (and, indeed, a Zariski) cover, consisting of opens which \emph{admit maximal ramification} along the divisor $D$, i.e., such that the ramification level becomes infinitely divisible as we go up in the tower. 
\begin{defi} 
We say that $V$ admits maximal ramification if there exists a tower of ramified covering $V^{(N)}$ indexed by $N\in \nn^\times$, and such that $V^{(N)}$ has level a multiple of $N$ at each point $x\in V$. We say such a tower is \emph{maximally ramified}.
\end{defi}

\begin{prop}
  Each point $x\in V$ has a Zariski neighborhood which admits maximal ramification.
\end{prop}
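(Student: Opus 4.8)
The plan is to produce, on a small enough affine neighbourhood of $x$, an explicit tower of ramified covers by adjoining roots of a single function cutting out a power of the whole boundary divisor, and then to verify the ramification level stratum by stratum against the \'etale-local toric models. Since ``admitting maximal ramification'' is a Zariski-local condition, we fix $x\in V$ and build the neighbourhood.

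First I would do the reduction. As $D$ is $\qq$-Cartier, let $m_0\geq 1$ be minimal such that $m_0 D$ is Cartier near $x$; shrinking to an affine open $V'\ni x$ on which $m_0 D$ is Cartier and, further, on which $\oo_{V'}(m_0 D)$ is trivial (possible because the Picard group of the local ring $\oo_{V,x}$ vanishes), write $m_0 D\cap V' = \operatorname{div}(g)$ with $g\in\Gamma(V',\oo_{V'})$ invertible on $U':=V'\setminus D$ and of order $m_0$ along every irreducible component of $D\cap V'$. Note $V'$ is again toroidal. The single arithmetic input I would record here, used twice below, is that $g$ is not a $p$-th power in $\Gamma(V',\oo_{V'})$ for any prime $p$: a $p$-th root would have order $m_0/p$ along each boundary component, forcing $p\mid m_0$ and making $(m_0/p)D$ Cartier near $x$, contradicting minimality of $m_0$.

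Next I would define the tower. For $N\in\nn^\times$ let $V'^{(N)}$ be the normalisation of $V'$ in the field $k(V')\!\left(g^{1/m_0 N}\right)$, with $D'^{(N)}$ the reduced preimage of $D$. By the non-power property this is a connected cyclic cover of degree $m_0 N$; it is finite over $V'$ and, since $\operatorname{char}k=0$ gives $\mu_{m_0 N}\subset k$, \'etale over $U'$, hence a ramified \'etale map of spaces with boundary, and $D'^{(N)}$ is $\qq$-Cartier since $m_0 N\,D'^{(N)}$ is the pullback of the Cartier divisor $m_0 D$. The identities $g^{1/m_0 N} = \left(g^{1/m_0 N'}\right)^{N'/N}$ for $N\mid N'$ provide the transition maps, so $\{V'^{(N)}\}$ is a tower indexed by $\nn^\times$. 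To check the ramification level at a boundary point $y\in V'$ lying on the stratum of codimension $j$, I would pass to an \'etale toroidal chart $X_\sigma\times\aa^{n-j}$ around $y$ (with $y$ the deepest point). Over the chart $g$ differs by a local unit from an $m_0$-th power of a function cutting out the local boundary; that unit has an $m_0 N$-th root locally, and writing $g=h_0^{e}$ with $h_0$ not a perfect power and $e\mid m_0$ in the local ring, the cover $V'^{(N)}$ becomes locally a disjoint union of $e$ copies of the connected cyclic cover $\{\,s^{(m_0/e)N} = (\text{unit})\cdot h_0\,\}$, each totally ramified of index $N$ along every local branch of $D$ and totally ramified over the deepest stratum. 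Hence the stabiliser $\Gamma_{\tilde y}$ of a lift of $y$ is cyclic of order $(m_0/e)N$, so the ramification level of $V'^{(N)}$ at $y$ is a multiple of $N$; at interior points of $U'$ the cover is \'etale and there is nothing to check. This shows $V'$ admits maximal ramification.

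I expect the only real obstacle to be the $\qq$-Cartier index $m_0$. The naive attempt --- adjoin an $N$-th root of an equation of $D$ --- fails precisely when $D$ is not Cartier, and the repair (passing to $m_0 N$-th roots of an equation of $m_0 D$ for the \emph{minimal} $m_0$) has to be arranged so that the cover stays connected and so that the ramification index comes out divisible by $N$ rather than by $N/\gcd(N,m_0)$. Both hinge on $g$ being a perfect power only to exponents dividing $m_0$, even after base change to an \'etale chart; that is the point at which minimality of $m_0$ is used and is the heart of the argument. Everything else --- finiteness and normality of the $V'^{(N)}$, \'etaleness over $U'$, the transition maps, and the identification of $\Gamma_{\tilde y}$ with the Galois group of the local cyclic cover --- is routine, the last step being exactly the statement, recalled above, that a ramified cover of a formal toric model factors through one of the canonical covers $\tau_N$.
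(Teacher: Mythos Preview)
Your argument is correct and takes a genuinely different route from the paper. The paper's proof is a single line, ``It suffices to take a toroidal neighbourhood,'' which points to the $N$-power maps $\tau_N$ on the local toric model $T_\sigma\times\aa^{n-k}$ (Galois group $(\zz/N)^k$) as the maximally ramified tower. You instead construct a \emph{cyclic} tower directly on a Zariski affine open by adjoining $g^{1/m_0 N}$ for $g$ a local equation of $m_0 D$. This is more explicit: it lives on a Zariski open from the outset rather than on an \'etale chart, it handles the $\qq$-Cartier index $m_0$ uniformly, and it avoids the question of how to descend the toric $N$-power tower from an \'etale neighbourhood to a Zariski one. The paper's approach is shorter but leans on the reader to supply that descent; yours trades the abelian cover for a cyclic one and pays for it with the local computation of $\Gamma_{\tilde y}$.

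One small slip to fix: the sentence ``over the chart $g$ differs by a local unit from an $m_0$-th power of a function cutting out the local boundary'' presupposes that $D$ is Cartier in the local toric model $X_\sigma$, which fails exactly when $X_\sigma$ is not Gorenstein (e.g.\ the cone on $(1,0),(2,3)$). What you actually need, and what your subsequent analysis uses, is only that \'etale-locally $g = u\cdot x^m$ with $m = e_0 m'$, $m'$ primitive in $M$, and $e_0\mid m_0$ (since $\langle m',v_i\rangle = m_0/e_0$ must be an integer for each ray $v_i$). With $e = e_0$ your decomposition into $e_0$ connected cyclic covers of degree $m_0 N/e_0$ and the stabiliser computation go through verbatim.
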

\begin{proof}
It suffices to take a toroidal neighobhood. 
\end{proof}
Now suppose the pair $(V,D)$ admits maximal ramification. Then (by basechange), so does the pair $(V', D')$ for any Zariski (or \'etale) neighborhood. We prove the following Lemma.
\begin{lm}
  Let $V^{(N,\t{ram})}$ be a Galois tower of ramified coverings of $V$ which attains maximal ramification. Let $\Gamma_N^{\t{ram}}$ be the corresponding tower of Galois groups. As before, let $V^{(\infty, \t{ram})}, \Gamma_\infty^{\t{ram}}$ be the inverse limit schemes in the locally affine sense, and let $\qcoh^{a.l.}(V^{(\infty, \t{ram})})$ be the category of sheaves $\F$ such that the map $\I^{(\infty, \t{ram})}\otimes\F \to \F$ is an isomorphism. Then the category $\qcoh^{a.l.}(V^{(\infty,\t{ram})})^{\Gamma_\infty\t{ram}}$ is canonically equivalent (as a symmetric monoidal category) to $\qcoh^{a.l.}(V^{(\infty)})^{\Gamma_\infty}$. 
\end{lm}
\begin{proof}
$V^{(\infty)}$ is a principal bundle over $V^{(\infty,\t{ram})}$ with respect to the group $$\Gamma_{\infty}^{\t{unram}}: = \ker(\Gamma_\infty\to \Gamma_\infty^{\t{ram}}).$$ It follows that we have an equivalence of the $\Gamma_\infty^{\t{unram}}$-invariants of the pre-log-coherent category with the category coming from the ramified tower: $$\qcoh(V^{(\infty)})^{\Gamma_\infty^{\t{unram}}}\cong \qcoh(V^{(\infty, \t{ram})}).$$ Now the short exact sequence of groups $\Gamma_{\infty}^{\t{unram}}\to \Gamma_\infty\to \Gamma_\infty^{\t{ram}}$ gives an identification $$\qcoh(V^{(\infty)})^{\Gamma_\infty}\cong \qcoh(V^{(\infty, \t{ram})})^{\Gamma_\infty^{\t{ram}}}.$$
(see e.g. \cite{vtor}, section 2). Further, it is clear that this equivalence is symmetric monoidal and takes the equivariant boundary ideal $\I_\infty\in \qcoh(V^{(\infty)})^{\Gamma_\infty}$ to its ramified analogue $\I_\infty^{\t{ram}}\in \qcoh(V^{(\infty, \t{ram})})^{\Gamma_{\infty}^{\t{ram}}}$. Thus, it induces an equivalence on the subcategories of almost local objects. 
\end{proof}
Now suppose that $V ( = (U, V, D))$ is a space with boundary which admits maximal ramification and $V_i$ is a Zariski (or an \'etale) cover of $V$. Then a maximally ramified tower $V^{(N)}$ over $V$ induces maximally ramified systems over the $V_i$, and it is clear that logarithmic categories defined in terms of restrictions of the same maximally ramified tower glue in either an \'etale or Zariski sense on $V$. Thus, the pre-log category of the space with boundary $V$ coincides with its log category (computed in terms of any cover, and hence also in the limit). Since the intersection of maximally ramifiable \'etale (or Zariski) subsets is maximally ramifiable, we see that the log-coherent category of an arbitrary space with boundary $V$ can be glued out of pre-log-coherent categories on any \'etale cover by maximally ramifiable opens. An analogous result holds for parabolic sheaves: when a space with boundary $V$ is maximally ramifiable, its category of pre-parabolic sheaves coincides with its category of parabolic sheaves, and these patch to give a definition of the \emph{parabolic coherent sheaf category} $$\qcoh_{par}(U, V, D)$$ for any space with boundary. This definition coincides with the notion in \cite{talpo-vistoli} of parabolic coherent sheaves on the infinite root stack (corresponding to the canonical log structure on the toroidal boundary). 

\section{Toric mirror symmetry results}
Here we describe in detail the log-coherent category for a toric variety. Let $(T, X, D)$ be a toric variety, viewed as a torus with toric boundary. Let $T=\spec(k[\M])$ be a torus with $\M$ a lattice, $\N : = \M^\vee$, and $\Sigma$ a fan in $\N_\rr^\vee$ (a set of cones $\sigma\in \M_\rr^\vee$). The $N$-power maps $\tau_N:T\to T$ can be extended to ``geomoetric Frobenius'' maps $\tau_N:X\to X$, which attain maximal ramification at every point of $X$. Write $X^{(N)}$ for the tower of toric varieties, compactifying \'etale tower of tori $T^{(N)}$. These correspond to lattices $\N^{(N)} : = N\N$ and $\M^{(M)} : = \frac{1}{N}\M.$ The space $X$ has an atlas by affines $X_\sigma$ for $\sigma\in \Sigma,$ with rings of functions $\oo_\sigma : = k[\M\cap \sigma^\vee].$ The preimages $X_\sigma^{(N)} = X^{(N)}\times_X X_\sigma$ in the tower have rings of functions $k[\frac{1}{M}\M],$ so that the affine limit $X_\sigma^{(\infty)} = X^{(\infty)}\times_X X_\sigma$ is affine with ring of functions $k[\M_\qq\cap \sigma^\vee]$ (here we are using that $\dirlim \frac{1}{N}\M\cap \sigma^\vee = \M_\qq\cap \sigma^\vee$). Now the Galois groups $\Gamma_N = (\zz/N)^n$ are, as affine schemes, given by $\spec k[\M/\frac{1}{N}\M]$, and the limit is $$\Gamma_\infty : = \spec \frac{\M_\qq}{\M}.$$ (Note that there is also an action of the larger group $T^{(\infty)} = \spec k[M_\qq]$, representing the ``pro-toric equivariance,'' and the pro-Galois $\Gamma_\infty$ action is then induced from the mapping $\Gamma_\infty\to T^{(\infty)}$ induced from the quotient map on groups in the other direction.) 

Now the category of $\Gamma_\infty$-equivariant spaces is (symmetric monoidally) equivalent (see \cite{vtor} section 2) to the category of $\frac{\M_\qq}{\M}$-\emph{graded} spaces. In particular, the action of $\Gamma_\infty$ on $X^{\infty}$ comes from $\frac{\M_\qq}{\M}$-graded (ring) structure on the $\sigma\cap \M_\qq,$ with grading via the composition $$(\sigma\cap \M_\qq)\to \M_\qq\to \frac{\M_\qq}{\M}.$$ The sheaf of ideals $\I$ is patched locally out of the (graded) $\oo_\sigma^{(\infty)}$-ideals $$\I_\sigma : = k[M_\qq\cap \mathring{\sigma}^\vee],$$ spanned by the interior of the corresponding cones. 

\subsection{Microlocal mirror symmetry}
The main result of \cite{vtor} (for rational fans) can be written as follows: let $S(\M)$ be the topological torus associated to $\M = \N^\vee.$ Let $\shv\left(S(M)\right)$ be the category of all topological sheaves on the torus with (bounded) derived category $$D^b\shv(S(\M)).$$ Then if $\Lambda\subset T^*\left(S(\M)\right)$ is a polyhedral coisotropic subset of the tangent space, there is a notion from microlocal analysis (see \cite{kashapira}) of ``derived topological sheaves with a singular support condition'' $$D^b\shv_\Lambda\left(S(\M)\right)$$ (usually, one considers constructible sheaves, but if $\Lambda$ is polyhedral, topological sheaves work as well). For $\Sigma$ a fan, define its ``support'' $|\Sigma|\subset \N$ to be the union of all its cones, $$|\Sigma| : = \sqcup_{\sigma\in \Sigma}\sigma\subset \N.$$

Using the fact that $S(\M)$ is a group and therefore has canonically trivial cotangent bundle, we have $T^*S(\M) \cong N_\rr\times S(\M).$ In terms of this condition, we define the polyhedral coisotropic subset $$\Lambda : = S(\M)\times |\Sigma| \subset T^*S(\M)$$, with $|\Lambda|$ over every point of $S(\M)$. Note also that if $X'\to X$ is a toric map of toric varieties, this corresponds to a map of cones $\Sigma'\to \Sigma$ (i.e., an inclusion of each cone $\sigma'$ in some cone $\sigma\in \Sigma$), and in particular an inclusion of support subsets, $|\Sigma'|\subset |\Sigma|\subset \N_\rr$. This inclusion is an equality if and only if $\Sigma'$ is a \emph{refinement} of $\Sigma,$ which is the case if and only if the map of toric varieties $X'\to X$ is \emph{proper}.
In \cite{vtor}, the author proves the following result. 
\begin{thm}[Main theorem of \cite{vtor}]\label{mainfromvtor}
There is an equivalence of derived categories $$D^b\qcoh_{log}(T, X, D)\cong D^b\shv_{\Delta}\left(S(\M)\right).$$ Further, if $\Sigma\to \Sigma'$ is a map of fans (corresponding to a toric map of toric varieties $X'\to X$) then we have an inequality on supports: $\Lambda\subset \Lambda'$, so $D^b\shv_{\Delta}$ is naturally a (fully faithful, dg) subcategory of $D^b\shv_{\Delta'}$. The inclusion functor then is compatible with the pullback functor on coherent categories $$D^b\qcoh_{log}(T, X', D')\to D^b\qcoh_{log}(T, X, D).$$
\end{thm}
The key consequence of this result is the following:
\begin{cor}
  Suppose $\beta:X'\to X$ is a proper toric map of toric varieties. Then derived pullback induces an equivalence of derived categories, $D^b\qcoh_{log}(X)\cong D^b\qcoh_{log}(X').$ 
\end{cor}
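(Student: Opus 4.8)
The plan is to deduce the corollary directly from Theorem \ref{mainfromvtor} by tracking what a proper toric map does on the microlocal side. First I would recall that by definition a proper toric map $\beta : X' \to X$ corresponds to a fan $\Sigma'$ which is a \emph{refinement} of $\Sigma$: every cone of $\Sigma'$ sits inside a cone of $\Sigma$, and the underlying supports agree, $|\Sigma'| = |\Sigma| \subset \N_\rr$. This last point — established in the discussion preceding the theorem — is the crux: properness of $\beta$ is equivalent to equality of supports, and hence to equality of the coisotropic subsets $\Lambda' = S(\M) \times |\Sigma'| = S(\M)\times|\Sigma| = \Lambda$ inside $T^*S(\M)$.

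Next I would invoke Theorem \ref{mainfromvtor} for both $X$ and $X'$: it supplies equivalences $D^b\qcoh_{log}(T, X, D) \cong D^b\shv_\Lambda(S(\M))$ and $D^b\qcoh_{log}(T, X', D') \cong D^b\shv_{\Lambda'}(S(\M))$. Since $\Lambda = \Lambda'$, the two target categories are literally the same subcategory of $D^b\shv(S(\M))$. Moreover, the theorem asserts that the inclusion $D^b\shv_\Lambda \hookrightarrow D^b\shv_{\Lambda'}$ induced by $\Lambda \subset \Lambda'$ is intertwined with the derived pullback functor $D^b\tau^*$ along $\beta$; when the two singular-support conditions coincide this inclusion is the identity functor, so $D^b\beta^*$ is identified with an equivalence. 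Unwinding the identifications, $D^b\beta^* : D^b\qcoh_{log}(T, X', D') \to D^b\qcoh_{log}(T, X, D)$ is an equivalence of derived categories, as claimed. (One should note the direction: pullback goes from the downstairs compactification's log category to that of the refinement — matching the variance in the statement of Theorem \ref{mainfromvtor}.)

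The one point requiring a little care — and the step I expect to be the main obstacle — is checking that the equivalence of Theorem \ref{mainfromvtor} is genuinely \emph{functorial} in the toric variety in the precise sense needed here, i.e.\ that the square relating $D^b\beta^*$ to the inclusion $D^b\shv_\Lambda \hookrightarrow D^b\shv_{\Lambda'}$ commutes (not merely up to some uncontrolled natural transformation). The statement of the theorem asserts compatibility of the inclusion with pullback, so strictly speaking this is given to us; but if one wanted a self-contained argument one would have to trace through the construction of the mirror equivalence in \cite{vtor} and verify that the comparison functor is natural with respect to refinements of fans. Granting that compatibility as stated, the rest is the purely formal observation that a fully faithful functor which is also essentially surjective (because source and target categories coincide) is an equivalence. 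Finally, I would remark that the symmetric monoidal refinement is automatic: both equivalences in Theorem \ref{mainfromvtor} are symmetric monoidal (intertwining tensor product with convolution on $\shv(S(\M))$), so $D^b\beta^*$ is a symmetric monoidal equivalence as well — which is exactly the local input needed for Theorem \ref{invariance}.
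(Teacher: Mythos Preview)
Your proposal is correct and matches the paper's intended argument: the corollary is stated without proof precisely because it follows immediately from Theorem \ref{mainfromvtor} together with the observation, made just before the theorem, that $\beta$ is proper if and only if $|\Sigma'| = |\Sigma|$, hence $\Lambda' = \Lambda$ and the inclusion of microlocal categories is the identity. One small slip: you write $D^b\beta^* : D^b\qcoh_{log}(T, X', D') \to D^b\qcoh_{log}(T, X, D)$, but pullback along $\beta:X'\to X$ goes the other way (as you correctly say in words a line earlier); since the functor is an equivalence this is harmless, but you should fix the arrow.
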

We deduce the following theorem. 
\begin{thm}\label{toroidal-inv}
  Given any strictly toroidal map $\beta:(U', X', D')\to (U, X, D)$, the pullback map on derived categories $D^b\beta^*:D^b\qcoh_{log}(U, X, D)\to D^b\qcoh_{log}(U', X', D')$ is an equivalence of derived categories. 
\end{thm}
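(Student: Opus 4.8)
The plan is to reduce the assertion to the toric case already handled by Theorem \ref{mainfromvtor} and its Corollary, using the fact that everything in sight is étale-local on the base together with the glueing description of $\qcoh_{log}$ from the previous section. First I would observe that, by definition of a strictly toroidal modification, there is an étale cover $V_i$ of $X$ such that $V_i' := V_i\times_X X'$ maps to $V_i$ by a product map $\one\times\beta_i:\aa^k\times X_{\Sigma_i}\to\aa^k\times X_{\sigma_i}$ with $\beta_i:X_{\Sigma_i}\to X_{\sigma_i}$ a proper equivariant map of toric varieties ($\Sigma_i$ a refinement of the cone $\sigma_i$). On each such chart, $\qcoh_{log}$ is computed as a pre-log-coherent category (the chart is toroidal, hence maximally ramifiable, so the Lemma on maximally ramified towers applies and pre-log = log there), and the $\aa^k$-factor contributes no ramification, so $\qcoh_{log}(V_i',V_i')\simeq \qcoh_{log}(X_{\Sigma_i})\boxtimes \qcoh(\aa^k)$ compatibly with pullback. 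The Corollary to Theorem \ref{mainfromvtor} then gives that $D^b\beta_i^*$ is a symmetric monoidal equivalence $D^b\qcoh_{log}(X_{\sigma_i})\xrightarrow{\sim} D^b\qcoh_{log}(X_{\Sigma_i})$; tensoring with the identity on $D^b\qcoh(\aa^k)$, we get that $D^b(\one\times\beta_i)^*$ is a symmetric monoidal equivalence on each chart.

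Next I would promote these chart-wise equivalences to a global one by descent. The key point is compatibility on overlaps: on $V_i\cap V_j$ the two toric structures restrict compatibly (the map $\tau$ is globally defined, so the local toric presentations agree over intersections up to the glueing data built into the definition of $\qcoh_{log}$), and the equivalences $D^b(\one\times\beta_i)^*$ are induced by the single global functor $D^b\tau^*$, hence automatically commute with the restriction functors $j_\infty^*$ used to glue. Since $D^b\qcoh_{log}$ on both $(U,X,D)$ and $(U',X',D')$ is the (homotopy) limit over the étale cover of the chart-wise derived categories, a functor which is an equivalence on each chart and compatible with restriction is an equivalence. Because $\tau^{-1}(U)=U'$ and $\tau|_{U'}$ is an isomorphism, the cover of $X'$ by the $V_i'$ refines appropriately and the glueing data matches; and the symmetric monoidal structure, being étale-local and preserved chart-wise, is preserved globally. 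This yields the symmetric monoidal equivalence of dg categories claimed.

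The main obstacle I anticipate is the descent/glueing step, specifically verifying that the chart-wise equivalences are genuinely compatible with the étale-descent data presenting $\qcoh_{log}$ — i.e.\ that the functor $D^b\tau^*$ really is the homotopy limit of the $D^b(\one\times\beta_i)^*$. Two subtleties feed into this. One is that $\qcoh_{log}$ was defined as a (strict, $2$-categorical) étale sheafification of a presheaf of Abelian categories, and one must check that passing to bounded derived categories commutes with this glueing in the relevant sense (so that "equivalence on charts + compatibility on overlaps $\Rightarrow$ global equivalence" is legitimate); here one uses that the restriction functors $j_\infty^*$ are exact and monoidal, so they descend to derived functors, and that the cover can be taken finite (Zariski, even, after shrinking to maximally ramifiable toroidal opens by the Proposition). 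The other is the almost-mathematics layer: one must confirm that $\beta_i^*$ carries the boundary ideal $\I$ to $\I'$ (true since $\tau^{-1}(D)_{red}=D'$ and the ramified towers are compatible under the proper toric map), so that almost-locality is preserved and the equivalence of the Corollary — which was stated for the almost-local categories — indeed restricts to what we need on overlaps. Once these bookkeeping points are pinned down, the theorem follows formally from Theorem \ref{mainfromvtor}.
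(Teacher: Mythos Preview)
Your proposal is correct and follows essentially the same approach as the paper: \'etale-locally reduce to the toric case, apply the Corollary to Theorem \ref{mainfromvtor} on each chart, and then glue using that $D^b\qcoh_{log}$ is an \'etale sheaf of dg categories. The paper's version differs only in packaging: instead of your K\"unneth splitting of the $\aa^k$ factor and abstract descent on overlaps, it argues directly that the toric equivalence is $\qcoh(V)$-linear (compatible with tensoring by $\oo_U$ for any open $U\subset V$), which is its mechanism for passing the chart-wise equivalence down to Zariski opens and hence to intersections.
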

\begin{proof}
First, suppose that $V'\to V$ is a map of toric varieties with $V$ affine and $U\subset V$ is an open subset. Let $U'$ be the preimage of $U$. Then we claim that the functor $D^b\qcoh_{log}(U)\to D^b\qcoh_{log}(U')$ is an equivalence of categories. This follows from the fact that the map $D^b\qcoh_{log}(V)\to D^b\qcoh_{log}(V')$ is fibered (locally) over $V$, and so the equivalence of derived categories is compatible with tensor product by any object of $\qcoh_{log}(V)$. In particular, $D^b\beta^*(\F)\otimes \oo_U\cong D^b\beta^*(\F\otimes \oo_U),$ which implies the lemma. 

Further, we note that $D^b\qcoh_{log}$ is a sheaf of dg categories in the \'etale topology: this follows from the corresponding statement for the derived category of (ordinary) quasicoherent sheaves, which is known. The theorem now follows from Theorem \ref{mainfromvtor} and the definition of strict toroidal maps.
\end{proof}
\begin{cor}
  Suppose $(U, X, D)$ is a normal-crossings space with boundary. Given any blow-up map $\beta:X'\to X$ along a smooth center normally embedded in one of the normal-crossings components, the functor $D^bj^*:D^b\qcoh_{log}(X)\to D^b\qcoh_{log}(X')$ is fully faithful.
\end{cor}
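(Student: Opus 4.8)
The plan is to exhibit $\beta$ as a weak normal-crossings modification, reduce to the strict case handled by Theorem~\ref{toroidal-inv} by enlarging the boundary divisor, and show that the enlargement costs nothing at the level of $\operatorname{RHom}$-complexes. Since $Z$ is smooth and normally embedded in a component of $D$, the map $\beta\colon X'\to X$ is a weak normal-crossings modification; by the definition of a weak modification, \'etale-locally on $X$ there is a normal-crossings divisor $D_+\supseteq D$ --- obtained by adjoining to $D$ the smooth hypersurfaces transverse to $Z$ along which $\beta$ acquires ramification --- such that, writing $U_+:=X\setminus D_+$ and $D_+':=\beta^{-1}(D_+)_{\mathrm{red}}$, the induced map $(U_+',X',D_+')\to(U_+,X,D_+)$ is a \emph{strict} toroidal modification. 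In the minimal model $X=\aa^2$, $D=\{x_2=0\}$, $Z=\{x_1=x_2=0\}$ one takes $D_+=\{x_1x_2=0\}$ and $\beta$ becomes the toric star subdivision $NC_2\to NC_2$. Full faithfulness of $D^b\beta^*$ follows once the natural map $R\mathcal{H}om_X(\mathcal F,\mathcal G)\to R\beta_*\,R\mathcal{H}om_{X'}(\beta^*\mathcal F,\beta^*\mathcal G)$ is an isomorphism of complexes of sheaves on $X$ for all $\mathcal F,\mathcal G$ (apply $R\Gamma(X,-)$); this is an \'etale-local condition on $X$ because $D^b\qcoh_{log}$ is an \'etale sheaf of dg categories, so we may assume a global $D_+$ as above.

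Let $a\colon D^b\qcoh_{log}(U,X,D)\to D^b\qcoh_{log}(U_+,X,D_+)$ and $a'\colon D^b\qcoh_{log}(U',X',D')\to D^b\qcoh_{log}(U_+',X',D_+')$ be the pullback (``adjoin boundary components'') functors induced by the evident maps of ramified towers; these are well defined because pullback along $V_+^{(\infty)}\to V^{(\infty)}$ preserves almost-locality (as $\I_+\supseteq\I$). Then $a'\circ D^b\beta^*\cong D^b\beta_+^*\circ a$, since both composites pull back along $\beta$ and record the ramification along all of $D_+'$, in the two possible orders. By Theorem~\ref{toroidal-inv}, $D^b\beta_+^*$ is an equivalence, so $a'\circ D^b\beta^*$ is fully faithful as soon as $a$ and $a'$ are. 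Granting that, for all $\mathcal F,\mathcal G$ the composite $\operatorname{RHom}(\mathcal F,\mathcal G)\to\operatorname{RHom}(\beta^*\mathcal F,\beta^*\mathcal G)\to\operatorname{RHom}(a'\beta^*\mathcal F,a'\beta^*\mathcal G)$ is an isomorphism, while its second leg is an isomorphism because $a'$ is fully faithful; hence its first leg is an isomorphism, i.e.\ $D^b\beta^*$ is fully faithful.

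The one non-formal input, and the main obstacle, is that adjoining finitely many transverse normal-crossings components to $D$ induces a fully faithful functor on $D^b\qcoh_{log}$. Adjoining components one at a time and using the product shape of the local models, this reduces to full faithfulness of $m\colon D^b\qcoh(\aa^1)\to D^b\qcoh_{log}(\aa^1,\{0\})$, $M\mapsto M\otimes_{k[x]}\I$, with $\I=k[\qq_{>0}]\subset k[\qq_{\ge 0}]$ the interior ideal of Section~4 (so that $m(\oo)=\I$ with its $\mu_\infty$-equivariant structure). Here one must control Faltings' almost mathematics at the new divisor: because $\I$ is flat and idempotent it is almost invertible and self-dual, $R\mathcal{H}om^{a.l.}(\I,\I)\cong\I$, and $\operatorname{Hom}^{a.l.}_{\Gamma_\infty}(\I,\I)=\{f\in k[\qq]:f\I\subseteq k[\qq_{\ge 0}]\}^{\mu_\infty}=k[\qq_{\ge 0}]^{\mu_\infty}=k[x]=\operatorname{End}(\oo)$; tensoring with a perfect $k[x]$-module $M$ then gives $\operatorname{RHom}^{a.l.}_{\Gamma_\infty}(mM,mN)\cong\operatorname{RHom}_{k[x]}(M,N)$. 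Pinning down the vanishing of the higher almost-$\operatorname{Ext}$ groups and the compatibility of the almost-localization with $\Gamma_\infty$-invariants is where the real work lies --- alternatively one can extract this from the mirror Theorem~\ref{mainfromvtor}, realizing $m$ as the inclusion of the microlocal sheaves on $S(\M)$ that extend across the relevant puncture. Note that $m$ is manifestly not essentially surjective (for instance $x^c\I$ with $c\in\qq\setminus\zz$ is not in its image), so $D^b\beta^*$ is genuinely only fully faithful, in agreement with the failure of invariance remarked after Theorem~\ref{invariance}.
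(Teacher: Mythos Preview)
Your approach is essentially the paper's: reduce \'etale locally, enlarge $D$ to $D_+$ so that $\beta$ becomes a strict toroidal modification (hence an equivalence by Theorem~\ref{toroidal-inv}), and then use that the ``adjoin boundary components'' functor $D^b\qcoh_{log}(U,X,D)\to D^b\qcoh_{log}(U_+,X,D_+)$ is fully faithful. The paper's proof is a two-sentence sketch that simply \emph{asserts} this last full-faithfulness, whereas you go further and reduce it to the one-variable model $D^b\qcoh(\aa^1)\to D^b\qcoh_{log}(\aa^1,\{0\})$ and outline the almost-mathematics computation (or mirror-symmetry alternative) needed there --- so your write-up is strictly more detailed than the paper's on the one non-formal step.
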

\begin{proof}
  For any such blow-up map we can, \'etale locally near any point $x\in X$, enlarge the divisor $D$ so that $\beta$ is a blow-up along a full normal-crossings component. We then use that derived full faithfulness can be checked locally, and that the pullback map $\qcoh_{log}(U, X, D)\to \qcoh_{log}(U', X, D')$ is fully faithful if $D\subset D'$ is a normal crossings sub-divisor.
\end{proof}
\begin{rmk}
  In our definition of strict toroidal map, we used \'etale neighborhoods. We could instead use formal neighborhoods: indeed, the two definitions agree when the compactification $X$ is compact. When $X$ is not compact, we have a diagram of Serre categories

  \begin{diagram}D^b\qcoh_{log}\big(X'\setminus X'_x\big)^{nilp}&\rTo &D^b\qcoh_{log}(X')&\rTo&D^b\qcoh_{log}(X')_{\hat{x}}\\
    \uTo&&\uTo&&\uTo\\
    D^b\qcoh_{log}\big(X\setminus X_x\big)^{nilp}&\rTo &D^b\qcoh_{log}(X)&\rTo&D^b\qcoh_{log}(X)_{\hat{x}}
\end{diagram}
which by quasicompact induction reduces the problem to checking equivalence of the map $D^b\qcoh_{log}(U, X, D)_{\hat{x}}^{nilp}\to D^b\qcoh_{log}(U', X', D')_{\hat{x}}^{nilp}$ of support subcategories, which follows from the result on toric varieties by observing that a support condition can be expressed as vanishing of tensor product by a certain sheaf on $X$.
\end{rmk}
\subsection{The universal nodal elliptic curve}
A particularly interesting example of this equivalence is as follows. Let $(\bar{\ee}, \ee, \ee_\infty)$ be the orbifold with boundary corresponding to the universal elliptic curve over the modular orbifold with boundary $(Y(1), X(1), \infty) = (\bar{\M_{1, 1}}, \M_{1, 1}, \infty)$. The modular orbifold has (ramified) Galois covers $X(N)$ classifying curves with level structure (a basis for the $N$-torsion lattice) and there is a universal family $\bar{\ee}(N)$ over $X(N)$, which can also be viewed as an orbifold with boundary; indeed, if $N\ge 3,$ these orbifolds will be honest spaces (so that the formalism of this paper applies without modification). Let $PGL_2(\qq_p)$ be the $p$-adic topological group and let $K_N\subset PGL_2(\qq_p)$ be the level-$N$ subgroup (which only depends on the $p$-adic valuation of $N$). It is well-known that the Hecke algebra $K_N\backslash PGL_2(\qq_p)/K_N$ acts on $X_N$ by correspondences. This action extends to an action of the algebra $K_N'\backslash GL_2(\qq_p)/K_N'$ on the open universal family $\ee(N)$. (Here $K_N'\subset GL_2(\qq_p)$ is a level group). However, this action does not extend to the compactification $\bar{\ee}_N$. The reason for this is that the fiberwise $\ell$-power map $[\ell]: \ee\to \ee$ does not extend to $\bar{\ee},$ but rather extends to a map $\bar{\ee}_{+\ell}\to \bar{\ee}$, where $\bar{\ee}_{+\ell}$ is the $\ell-1$-fold blow-up of the univeral elliptic curve at the node of $\ee_\infty$ (this is the nodal family classified by a map $X(1)\to \M_{1, \ell^2}$, with the $\ell^2$ points at the node arranged as sets of $\ell$th roots of unity in a necklace of $\ell$ copies of $\pp^1$). Theorem \ref{toroidal-inv} implies that the derived log-coherent category of $(\ee, \bar{\ee}_{+\ell}, \ee_{\infty, +\ell})$ is equivalent to that of $(\ee, \bar{\ee}, \ee_\infty),$ and in fact the action of the full Hecke algebra $K_N'\backslash GL_2(\qq_p)/K_N'$ can be reconstructed on a dg categorical level.

\section{Forms and Hochschild homology}
From this section onwards, we assume that the space with boundary $V$ has normal-crossings (rather than toroidal) compactification. Up to passing to derived categories, we can reduce to this case using Theorem \ref{invariance}. Recall that we defined the category $\qcoh_{par}$ of parabolic coherent sheaves as sheaves on the orbifold locally (on maximally ramifiable opens) given by $\qcoh(V^{(\infty)})^{\Gamma_\infty}.$ Now the category $\qcoh_{log}(V)$ is a subcategory of $\qcoh_{par}(V)$, hence is enriched in $\qcoh_{par}(V)$ (note that also, a fortiriori, it is enriched over the category of coherent sheaves on $V$). For a couple of sheaves $\F, \G\in \qcoh_{log}(V)$, write $\homu(\F, \G)\in \qcoh_{par}(V)$ and $\rhomu(\F, \G)\in D^b\qcoh_{par}(V)$, for these enriched versions of $\hom.$ Now while $\qcoh_{log}$ is not Morita equivalent to the category of modules over a ring, it nevertheless has a good notion of Hochschild homology and cohomology: these are defined more generally for any dg (or indeed spectrally enriched) category by Blumberg and Mandell in \cite{blumberg-mandell}. The homology is defined as a very large complex associated to a simplicial space with simplices parametrized by (multilinear combinations of) cycles of objects with composable morphisms 
\[
\begin{tikzcd}[row sep=3.6em,column sep=1em]
& X_0\arrow[rr,"f_1"] && X_1 \arrow[dr,"f_2"] \\
X_r\arrow[ur,"f_0"]  &&  && X_2\arrow[dl, "f_3"] \\
& X_{r-1} \arrow[ul,"f_{r-1}"] && X_3 \arrow[ll, dotted, no head]
\end{tikzcd}
\]
(not necessarily commutative), and face and degeneracy morphisms come from composing edges or including a unit edge. Note that the diagram is cyclically asymmetric with the arrow $f_0$ playing a special role (corresponding to the basepoint in the standard cell decomposition of the circle). Here to make the definition precise we need to take a small cofibrantly generating piece of the category (for example, the derived category of countably generated modules should do the trick), and any two such choices produce canonically isomorphic homology, see \cite{blumberg-mandell}, so long as they both are contained in a third small subcategory. In particular, the $\qcoh_{log}$-enrichment of the arrow $f_0$ extends to an enrichment of the entire diagram, and we have a \emph{parabolic} Hochschild homology object $$CH_*^{par}(\qcoh_{log}V)$$ defined by the same diagram with the equivariant $V^{(\infty)}$ action taken into account. It is easy to see (analogously to the case of ordinary sheaves) that the Hochschild homology is \'etale local on the space with boundary $V$ (indeed, even formally local in an appropriate sense). Suppose WLOG $V$ is affine and maximally ramifiable. Define $T_\infty^\vee$ for the tangent sheaf of $V_\infty,$ given as a module by the quotient of $V_\infty^{\otimes 2}$ with image of $f\otimes g$ written $f dg$ with $f, g\in V_\infty$ and quotiented out by the standard relation $$x d(y_1 y_2) = xy_1 d y_2 + xy_2 dy_1.$$ This is a $\Gamma_\infty$-equivariant sheaf (it can be interpreted as $\tor_*^{V\times V}(\oo_\Delta, \oo_\Delta)$), and as it naturally lives in $V\times V$. Define $$(T_\infty^\vee)^{a.l.} : = \hom(\I, \I\otimes_{\oo_\infty}T_\infty^\vee).$$ This sheaf is naturally $\Gamma_\infty$-equivariant: write $T^\vee_{par}$ for the resulting object of $\qcoh_{par}$.

The main result of this section is the following theorem.
\begin{thm}
  \begin{enumerate}
  \item There is a natural map of complexes of parabolic sheaves $T_{log}^\vee[1]\to CH_*(\qcoh_{log})$.
  \item The map induced by graded commutative multiplication, $\Lambda^*T_{log}^\vee[1]\to CH_*^{par}(\qcoh_{log})$ is an isomorphism.
  \item Locally (for maximally ramifiable, affine $V$), we have $$R\Gamma\Big(V, CH_*^{par}\left(\qcoh_{log}(V)\right)\Big)\cong \Gamma(V, \Omega^*_{log}),$$ for $\Omega^*_{log}$ the sheaf of logarithmic differentials.
  \end{enumerate}
\end{thm}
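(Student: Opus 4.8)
## Proof proposal

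The plan is to reduce everything to a local computation on a maximally ramifiable affine $V$ and then to identify the three objects with explicit $\Gamma_\infty$-equivariant resolutions. First I would set up the parabolic Hochschild complex concretely: since $V$ is affine and $\qcoh_{log}(V) = \qcoh^{a.l.}(V^{(\infty)})^{\Gamma_\infty}$, the cyclic bar construction of Blumberg--Mandell applied to a small generating subcategory computes $CH_*^{par}(\qcoh_{log})$ as the equivariant Hochschild complex of the (non-finite-type, but flat over $V$) ring $\oo_\infty$, \emph{modified by the almost-locality condition}. Concretely, I expect $CH_*^{par}(\qcoh_{log}(V))$ to be computed by the standard cyclic bar complex $\oo_\infty^{\otimes (\bullet+1)}$ over $V$, but with each tensor factor replaced by its almost-local avatar via the functor $\F \mapsto \hom(\I, \I\otimes \F)$ — equivalently, by taking the $\Gamma_\infty$-equivariant diagonal bimodule to be $\oo_\Delta^{a.l.} := \hom(\I,\I\otimes\oo_\Delta)$ on $V^{(\infty)}\times_V V^{(\infty)}$. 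The first step is thus to justify that the Blumberg--Mandell construction, for this enriched category, is represented by $\tor^{V^{(\infty)}\times_V V^{(\infty)}}_*(\oo_\Delta^{a.l.}, \oo_\Delta^{a.l.})$ as a $\Gamma_\infty$-equivariant object, i.e.\ by the almost-local modification of the HKR complex.

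Second, for part (1) and (2), I would produce the HKR-type map by the usual route: the degree-one part of the cyclic bar complex maps to $T^\vee_\infty[1]$ by $f_0\otimes f_1 \mapsto f_0\, df_1$, this is $\Gamma_\infty$-equivariant and compatible with the almost-local modification since $\I$ is flat and idempotent, yielding $T^\vee_{log}[1]\to CH^{par}_*(\qcoh_{log})$ after passing to the almost-local quotient; the multiplicative extension $\Lambda^* T^\vee_{log}[1]\to CH^{par}_*$ follows from the shuffle product. To prove it is an isomorphism, the key point is that $V^{(\infty)}$ is (pro-)smooth over $k$ in the relevant sense — each $V^{(N)}$ is a toric (hence, étale-locally after our normal-crossings reduction, a smooth) variety — so classical HKR gives $\tor^{V^{(N)}\times V^{(N)}}_*(\oo_\Delta,\oo_\Delta)\cong \Lambda^* \Omega^1_{V^{(N)}}$, and these are compatible in the tower. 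Passing to the colimit of rings and then applying the exact functor $-\otimes\I$ (which computes the almost-local projection and commutes with $\Lambda^*$ because $\I$ is flat idempotent satisfying $\I\otimes\hom(\I,\F)=\I\otimes\F$) gives the claimed isomorphism of $\Gamma_\infty$-equivariant objects. Here one uses the product description of a normal-crossings triple and the explicit toric model $\oo_\infty = k[\M_\qq\cap\sigma^\vee]$, $\I = k[\M_\qq\cap \mathring\sigma^\vee]$ from the toric section.

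Third, for part (3), I would compute $R\Gamma(V, \Lambda^* T^\vee_{log}[1])$ and match it with $\Gamma(V,\Omega^*_{log})$. Taking $\Gamma_\infty$-invariants (i.e.\ the degree-zero graded piece, since $\qcoh^{\Gamma_\infty}$ is $\M_\qq/\M$-graded sheaves and global sections on the orbifold means the weight-zero part) of the almost-local module $(T^\vee_\infty)^{a.l.} = \hom(\I,\I\otimes T^\vee_\infty)$ over the toric chart yields exactly the sheaf of log-vector-fields dual to $\Omega^1_{log}$ — this is precisely the computation that the "almost" modification, combined with the $\mathring\sigma^\vee$-interior ideal, cuts the integral-weight part of $\Omega^1$ down to forms with at worst log poles along $D$; and $\Lambda^*$ of this is $\Omega^*_{log}$. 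One must also check there is no higher cohomology contribution: $V$ affine gives $R\Gamma = \Gamma$ on each $\qcoh(V^{(N)})$, and the colimit/invariants operations are exact here, so $R\Gamma(V, CH^{par}_*) = \Gamma(V,\Lambda^* T^\vee_{log})$ with no derived correction.

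The main obstacle, I expect, is the first step: pinning down precisely which complex the abstract Blumberg--Mandell Hochschild homology of the \emph{enriched, non-Morita-affine} category $\qcoh_{log}(V)$ computes, and showing it agrees with the almost-local HKR complex $\oo_\infty^{\otimes(\bullet+1)}\otimes\I$. The subtlety is that $\qcoh_{log}$ is not modules over a ring, so one cannot directly invoke Morita invariance; one has to argue that the correct small generating subcategory (e.g.\ the almost-local avatars of vector bundles on the $V^{(N)}$, or the derived category of countably generated modules as flagged in the text) has Hochschild homology computed by the bimodule $\oo^{a.l.}_\Delta$, and that the equivariance is correctly tracked through the cyclic bar construction. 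Everything downstream — the HKR map, its multiplicativity, the identification with log-forms — is then a comparatively routine, if notationally heavy, toric/almost-algebra computation using the flat idempotent ideal $\I$ and the explicit monoid-algebra models.
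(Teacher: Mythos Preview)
Your approach is genuinely different from the paper's, and the gap you flag is real and is precisely the point where the paper takes a completely different route.

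The paper does \emph{not} attempt to identify the Blumberg--Mandell complex of $\qcoh_{log}$ with an almost-local bar complex, nor does it run classical HKR on each $V^{(N)}$ and pass to the colimit. Instead, it constructs the map in part (1) by factoring through $\qcoh_{par}$: there is a map $T^\vee_{par}\to CH_*^{par}(\qcoh_{par})$ (using that $T^\vee(\Gamma_\infty)=0$ in characteristic zero), and a map $CH_*^{par}(\qcoh_{par})\to CH_*^{par}(\qcoh_{log})$ obtained by sending a cycle of $X_i$'s to the cycle of $\I\otimes X_i$'s; composing and applying $\hom(\I,\I\otimes-)$ produces the map from $T^\vee_{log}$. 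To prove it is an isomorphism, the paper reduces \'etale-locally to $NC_1$ and then \emph{imports the answer from mirror symmetry}: the equivalence $D^b\qcoh_{log}\cong D^b\shv_\Lambda(S^1)$ from \cite{vtor} lets one compute $HH_*(\qcoh_{log}(\aa^1,\gg_m,\{0\}))$ and $HH_*(\qcoh_{log}(\pp^1,\gg_m,\{0,\infty\}))$ as self-Tor of an (anti-)diagonal sheaf on $S^1\times S^1$, yielding $k[x]\oplus k[x][1]$ and $k\oplus k[1]$ respectively. The isomorphism in part (2) is then deduced by a dimension/injectivity argument comparing the explicit source $k[x^{\qq_+}]\, d\log x$ with this externally computed target, and the general normal-crossings case follows by K\"unneth.

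What this buys the paper is that it never has to prove the assertion you correctly identify as hard: that the abstract Hochschild homology of the Serre quotient $\qcoh_{log}=\qcoh_{par}/\qcoh^{a.z.}$ is computed by any particular ``almost'' modification of the bar complex of $\oo_\infty$. That statement is not obvious; the localization sequence (Blumberg--Gepner--Tabuada, used in the paper's final subsection) shows that $HH_*(\qcoh_{par})$ and $HH_*(\qcoh_{log})$ genuinely differ, so one cannot simply tensor the bar complex by $\I$ and be done. Your proposed identification $CH_*^{par}(\qcoh_{log})\simeq \tor(\oo_\Delta^{a.l.},\oo_\Delta^{a.l.})$ may well be true, but proving it requires an argument you have not supplied, and the paper circumvents the issue entirely by computing the target via topology. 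Conversely, your route, if the gap were filled, would be self-contained and would not depend on the mirror symmetry input from \cite{vtor}; that is a real advantage, but as written the proposal does not close the loop.
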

To prove this, we compare to the internal Hochschild homology object $CH_*^{par}(\qcoh_{par}(V))$ of $\qcoh_{par}$ itself. On the one hand, we have a map $$T^\vee_{par}\to CH_*^{par}(\qcoh_{par}(V)),$$ induced by observing that $T^\vee(\Gamma^\infty) = 0$ in characteristic zero (indeed, a differential form on $\Gamma^\infty$ must factor through a differential form on some discrete $\Gamma^N$, hence is zero). On the other hand, we have a natural map $CH_*^{par}(\qcoh_{par})\to CH_*^{par}(\qcoh_{log}),$ given by taking a diagram of the form
\[
\begin{tikzcd}[row sep=3.6em,column sep=1em]
& X_0\arrow[rr,"f_1"] && X_1 \arrow[dr,"f_2"] \\
X_r\arrow[ur,"f_0"]  &&  && X_2\arrow[dl, "f_3"] \\
& X_{r-1} \arrow[ul,"f_{r-1}"] && X_3 \arrow[ll, dotted, no head]
\end{tikzcd}
\]
with $X_i\in \qcoh_{par}$ and replacing it with the diagram of $\I\otimes X_i.$ In particular, observe that for each such diagram, there is a map $\hom_{V^{(\infty)}}(\I, \hom(X_r, \I\otimes X_0))\to \hom(\I\otimes X_r, \I\otimes X_0)$ which is compatible with simplicial structure. This gives us a canonical map $\hom(\I, \I\otimes CH_*^{par}(\qcoh_{par}))\to CH_*^{par}(\qcoh_{log}V).$ Already on the level of $T^\vee_{par},$ this gives us logarithmic forms. To see this, up to application of an \'etale local change of coordinates we can assume that $V$ is a standard genearlized normal-crossings triple, $\aa^{n-k}\times NC_k.$ In fact, it will be enough for us to consider the simplest case $$V = NC_1 : = (\gg_m, \aa^1, \{0\}).$$ Here for any rational $q> 0$, we have a homogeneous form $\frac{1}{q}d x^{q} = x^{q-1}dx\in T^\vee_{par}.$ Now we have an obvious map $T^\vee_{par}\to \hom(\I, \I\otimes T^\vee_{par}).$ Now note that we have an additional map, which we call $d\log x:\I \to \I\otimes T^\vee_{par},$ which is the map sending $x^q\mapsto \frac{1}{q} d x^q$ for $q> 0$ (i.e. $x^q\in \I$). Thus we get a copy of $k[x^{\qq_+}]d\log x$ sitting inside $CH_*^{par}(\qcoh_{log})$. 

Now we use the following results that follow from arguments in \cite{vtor}.
\begin{thm}
  \begin{enumerate}
  \item   $HH_*\qcoh_{log}(\pp^1, \gg_m, \{0,\infty\}) \cong k\oplus k[1].$
  \item   $HH_*\qcoh_{log}(\aa^1, \gg_m, \{0\})\cong k[x]\oplus k[x][1].$
  \end{enumerate}

\end{thm}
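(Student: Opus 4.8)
The plan is to compute both Hochschild homologies by reducing, via the mirror equivalence of Theorem~\ref{mainfromvtor}, to a computation of Hochschild homology of the constructible-sheaf category $D^b\shv_\Lambda(S(\M))$, which in the toric cases at hand is a very concrete thing. For the first statement, $(\pp^1,\gg_m,\{0,\infty\})$ corresponds to the fan $\Sigma$ in $\N_\rr = \rr$ consisting of the two rays $\rr_{\ge 0}$ and $\rr_{\le 0}$ together with the origin, so $|\Sigma| = \rr = \N_\rr$ and hence $\Lambda = S(\M)\times\N_\rr = T^*S(\M)$ is the \emph{full} cotangent bundle of the circle $S^1$. With no singular-support restriction, $D^b\shv_\Lambda(S^1)$ is just $D^b\shv(S^1)$, all topological sheaves on the circle; equivalently (since $S^1 = B\zz$ topologically, or directly) this is the derived category of local systems / modules over the chains $C_*(\Omega S^1) = k[\zz]$, i.e.\ $k[t,t^{-1}]$-modules. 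Hochschild homology of $S^1$ (equivalently of the group algebra $k[\zz]$, equivalently the free loop space $LS^1 \simeq S^1\times\zz$, or just: $HH_*$ of $\qcoh(S^1)$ via HKR) is $H_*(S^1;k) = k\oplus k[1]$. I would spell out which of these incarnations gives the cleanest self-contained derivation — HKR for the smooth proper ``space'' $S^1$, i.e.\ $\bigoplus_{q-p=*}H^p\Omega^q = H^0\oo\oplus H^0\Omega^1[1] = k\oplus k[1]$ — and cite \cite{blumberg-mandell} for agreement of the enriched-category $HH$ with this.

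For the second statement, $(\aa^1,\gg_m,\{0\})$ is the affine toric variety for the fan $\Sigma$ in $\rr$ consisting of the single ray $\rr_{\ge 0}$ (and the origin), so $|\Sigma| = \rr_{\ge 0}$ and $\Lambda = S^1\times\rr_{\ge 0}\subset T^*S^1$ is a half-cotangent-bundle: a nontrivial coisotropic with a wall. The category $D^b\shv_\Lambda(S^1)$ is then the category of complexes of sheaves on $S^1$ microsupported in the ``positive'' conormal direction everywhere; one identifies this — as in Nadler's / Kashiwara--Schapira's picture, or directly from \cite{vtor} — with $D^b$ of modules over $k[x]$ with $x$ recording the monodromy/winding in the allowed direction, i.e.\ with $D^b\qcoh(\aa^1)$. (This is exactly the ``microlocal mirror'' incarnation of the statement $D^b\qcoh_{log}(\aa^1,\gg_m,\{0\})\cong D^b\qcoh(\aa^1)$, which also follows from the toric results of the previous section.) Hence by HKR on $\aa^1$, $HH_*\qcoh_{log}(\aa^1,\gg_m,\{0\}) = H^0\oo_{\aa^1}\oplus H^0\Omega^1_{\aa^1}[1] = k[x]\oplus k[x]\,dx[1]\cong k[x]\oplus k[x][1]$, matching the claim, and with the generator of the degree-$1$ part being $dx$ — which I would note is the $d\log$ class $x\,d\log x$ discussed just above, so that the computation is compatible with the explicit $k[x^{\qq_+}]d\log x$ found inside $CH_*^{par}$.

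Concretely the steps are: (1) translate each pair $(U,X,D)$ into its fan $\Sigma$ and its coisotropic $\Lambda\subset T^*S^1$; (2) invoke Theorem~\ref{mainfromvtor} to replace $\qcoh_{log}$ by $\shv_\Lambda(S^1)$; (3) identify $\shv_\Lambda(S^1)$ with a familiar geometric category — $\qcoh(S^1)$ (topological circle) in the first case, $\qcoh(\aa^1)$ in the second — using that a full $\Lambda$ imposes no condition and that the half-bundle $\Lambda$ recovers the affine line; (4) compute $HH_*$ of that category by HKR, using that $HH$ of a dg category is Morita/derived-invariant and agrees with the Blumberg--Mandell enriched $HH$ (\cite{blumberg-mandell}), and that both target categories are smooth so HKR applies; (5) check the grading/generators match the stated $k\oplus k[1]$ and $k[x]\oplus k[x][1]$, and record compatibility with the $d\log$ description. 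The main obstacle is step~(3) in the non-full case: making the identification $D^b\shv_{S^1\times\rr_{\ge 0}}(S^1)\simeq D^b\qcoh(\aa^1)$ precise enough to transport Hochschild homology — one must be careful that the equivalence is of (symmetric monoidal) dg categories, not merely triangulated categories, since $HH_*$ is not a triangulated-category invariant; I would handle this by citing the dg-enhanced form of Theorem~\ref{mainfromvtor} together with the explicit microlocal-to-coherent dictionary of \cite{vtor}, rather than reproving it. A secondary subtlety is bookkeeping the degree shift and the Tate-twist/normalization conventions so that the $[1]$ lands on the one-forms and the ring variable $x$ is genuinely the coordinate on $\aa^1$ (equivalently the $d\log$ coordinate), but this is routine once the category identification is in hand.
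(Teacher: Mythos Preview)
Your overall architecture matches the paper through step~(2): both arguments invoke Theorem~\ref{mainfromvtor} to transport the problem to the microlocal side $D^b\shv_\Lambda(S^1)$. The divergence, and the problem, is in your step~(3).

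For part~1 your identification of $D^b\shv(S^1)$ with $k[t,t^{-1}]$-modules, together with the claimed value of its Hochschild homology, is internally inconsistent. You yourself write $LS^1\simeq S^1\times\zz$; then $HH_*(k[\zz])\cong H_*(LS^1)=\bigoplus_{n\in\zz}H_*(S^1)$ is infinite-dimensional, not $k\oplus k[1]$. Equivalently, HKR for the commutative ring $k[t,t^{-1}]$ gives $k[t,t^{-1}]\oplus k[t,t^{-1}]\,dt[1]$. So either the identification with $k[\zz]$-modules is not the one relevant for computing $HH_*$ of the sheaf category, or your evaluation of $HH_*$ is wrong; in either case the argument does not close. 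The paper avoids this by computing $HH_*$ directly as the self-Tor of the (anti-)diagonal sheaf $k_\Delta$ in $\shv(S^1\times S^1)$, which is a local computation in the normal direction and yields the two-dimensional answer without ever naming an algebra whose module category is $\shv(S^1)$.

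For part~2 the asserted equivalence $D^b\shv_{S^1\times\rr_{\ge 0}}(S^1)\simeq D^b\qcoh(\aa^1)$ is not proved in the paper, is not a consequence of Theorem~\ref{mainfromvtor}, and you give no independent argument for it. (Note that an equivalence $D^b\qcoh_{log}(\gg_m,\aa^1,\{0\})\simeq D^b\qcoh(\aa^1)$ would be a substantial statement in its own right; the paper never claims it.) The paper instead identifies the kernel representing the identity functor on $\shv_+(S^1)$ explicitly as the constant sheaf on a half-cylinder in $S^1\times S^1$, and computes its self-Tor by a push--pull argument, obtaining $k[x]\otimes H_*(S^1)$ with the $x$-action traced through the geometry. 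Your shortcut would, if the equivalence held, give the right vector space, but as written it rests on an unproven (and likely false at the level you need) identification; you should either supply a dg equivalence with a reference, or follow the paper's direct bimodule computation.
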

The first result follows from a computation of the self-tor of the skyscraper sheaf of the (in fact, anti-\footnote{because we are conflating $\qcoh_{log}$ and $\qcoh_{log}^{op},$ which necessitates a sign.}) diagonal circle $\Delta \subset S^1\times S^1$, which represents the identity functor. For the second argument, let $\shv_+(S^1)$ be the category of sheaves with singular support the positive (co)tangent ray. Let $\Delta\in S^1\times S^1$ be the anti-diagonal. Then the category of dg functors $D^b\shv_+(S^1)\to D^b\shv_+(S^1)$ is classified by complexes of sheaves on $S^1\times S^1$ which have singular support contained in the quadrant $\rr_+\times \rr_+.$ The initial such object living over the (anti-)diagonal sheaf $\Delta$ is the pushforward of the constant sheaf on the semi-cone $$\frac{\{(x, y)\mid x-y> 0\}}{(x, y)\sim (x+1, y-1)}.$$ Now choosing tilted coordinates on the torus, this semi-cylinder becomes just a copy of $\rr\times S^1$, mapping to $S^1\times S^1.$ Using standard equivariance and push-pull arguments, the self-tor then becomes $\bigoplus_{k\in \zz}H_*(\rr_+\times S^1 \delta_{\rr_+ + k});$ the resulting space will be isomorphic to $k[x] H_*(S^1)$, where by tracing through the arguments, one can see that multiplication by $x$ on the two sides is intertwined. By using the same argument after quotienting out a finite subgroup of $S^1,$ we see that indeed, we have abstractly $CH_*^{par}(\qcoh_{log})\cong k[x^{\qq_+}].$ Thus we have a map $$d\log k[x^{\qq^+}]\to T^\vee_{log}\cong k[x^{\qq_+}],$$ which we know is injective and compatible with multiplication by $x^n$ by a restriction to $\gg_m$ argument. Further, we know that the two sides have a common finite-dimensional equivariant quotient (corresponding to the $\pp^1$ case). It follows that the map from the LHS to the RHS above is an isomorphism. We deduce the general case $D_1^k\times \aa^1$ from the case of $D_1$ and the space without boundary $\aa^1$ (where the statements are well-known) by applying a simple hybrid of the K\"unneth formula in topology and in algerbaic geometry. Now both sides embed injectively in $HH_*(\gg_m)$, and from this we deduce compatibility with differential. This completes our proof of the HKR isomorphisms, and implies Theorem \ref{formstheorem}.

\subsection{Computation of $HH_*(\qcoh_{par}(NC_1)).$}
  Using the above result and exactness of $CH_*$ for Serre short exact sequences of categories (see e.g. Blumberg, Gepner and Tabuada, \cite{blumberg-gepner-tabuada}), we can compute the Hochschild homology of the \emph{parabolic} category $HH_*(\qcoh_{par}(NC_1)).$ Namely, we have a Serre short exact sequence $\vect_{\qq/\zz}\to \qcoh_{par}(NC_1)\to \qcoh_{log}(NC_1),$ where $\vect_{\qq/\zz}$ is the category of $\qq/\zz$-graded vector spaces, with Hochschild homology $k^{\oplus \qq/\zz}.$ In particular, we see that we have an isomorphism $HH_{\ge 2}(\qcoh_{par}(NC_1))\to HH_{\ge 2}(\qcoh_{log}(NC_1)).$  On the level of $HH_1,$ we use that $\Gamma^\infty= \spec k[\qq/\zz]$ has trivial tangent bundle to see that $HH_1(\qcoh_{par}(NC_1))\cong T^\vee_{par}\cong k[x^{\ge -1}]dx,$ so that the map $HH_1(\qcoh_{par}(NC_1))\to HH_1(\qcoh_{log}(NC_1))$ is one-dimensional. By exactness, this leaves the remaining $k^{\oplus \qq/\zz\setminus 0}$ elements of $HH_0\vect_{\qq/\zz}$ to get added, so that $HH_0(\qcoh_{par}(NC_1))\cong HH_0(\qcoh_{log}(NC_1))\oplus k^{\oplus \qq/\zz\setminus 0}\cong x \cdot k[x]\oplus k^{\oplus \qq/\zz}.$ It should be possible to deduce from this the sheaf version of $HH_*(\qcoh_{par}(V))$ for any normal crossings space with boundary $V$ (by using a K\"unneth formula and \'etale locality), but the answer is much messier than the homology of $HH_*(\qcoh_{log}(V)).$

\nocite{*}
\bibliography{logbib}{}
\bibliographystyle{plain}

\end{document}